\newtheorem{thm}{Theorem}[section]
\newtheorem{lem}[thm]{Lemma}
\newtheorem{pro}[thm]{Proposition}
\theoremstyle{remark}
\newtheorem{remark}[thm]{Remark}
\newcommand{\R}{\mathbb{R}}
\newcommand{\Rd}{\mathbb{R}^{d}_{+}}
\newcommand{\Rq}{R_{\theta_{1}, \theta_{2}}^{q}}
\newcommand{\sken}{S_{R}^{+}}
\newcommand{\sbat}{S_{R}^{1}}
\newcommand{\sbi}{S_{R}^{2}}
\newcommand{\Rken}{\mathbb{R}^{d}_{+}(\alpha_{-})}
\newcommand{\Rgehi}{\mathbb{R}^{d}_{+}(\alpha_{+})}
\newcommand{\bken}{B_{R}^{+}}
\numberwithin{equation}{section}
\title[Robin problems]{The inverse Robin boundary value problem in a half-space}
\author[L. P\"aiv\"arinta, M. Zubeldia]{Lassi P\"aiv\"arinta and Miren Zubeldia}
\address{Department of Mathematics and Statistics, University of Helsinki. P.O. Box 68 (Gustaf H\"allstr\"ominkatu 2b) FI-00014}
\email{miren.zubeldia@helsinki.fi}
\email{lassi.paivarinta@helsinki.fi}
\date{\today}
\subjclass[2010]{35G15, 35J10, 35R30, 35Q60}
\keywords{inverse problems, Schr\"odinger equation, impedance parameter, Robin-to-Robin map}
\begin{document}

\maketitle

\begin{abstract}
We study the inverse Robin problem for the Schr\"odinger equation in a half-space. The potential is assumed to be compactly supported. We first solve the direct problem for dimensions two and three. We then show that the Robin-to-Robin map uniquely determines the potential $q$. 
\end{abstract}

\section{Introduction}

In this paper we consider a Robin boundary problem for wave propagation modelled by the Schr\"odinger equation
\begin{equation}\label{schro}
(\Delta + q(x))u(x)=0
\end{equation}
in the half-space $\R^{3}_{+} = \{(x_1, x_2, x_3) \in \R^3 : x_3 > 0 \}$ and in the half-plane $\R^2_{+}= \{ (x_1, x_2)\in \R^2 : x_2 > 0 \}$, with the impedance boundary condition
\begin{equation}\label{impedance}
\frac{\partial u}{\partial x_d} + \theta u = f \quad \quad \text{over} \quad \quad x_d=0,
\end{equation}
for $d=2,3$ and $\theta\in \mathbb{C}$, subject to suitable radiation conditions in each case. This problem arises in the propagation of electromagnetic and acoustic waves. 

We assume $\Im q(x) \geq 0$ in $\Rd$, $d=2,3$ and $\Im q(x) >0$ in an open and bounded domain $\Omega \subset \Rd$. In addition, it is required that $q(x)$ differs from the constant $k^{2}$ only in a bounded domain that contains $\Omega$, where the wave number $k$ is positive and fixed. It is customary to write the impedance parameter $\theta$ as $\theta = ik\beta$ where $\beta$ is a given constant, called an acoustic admittance. 

For the applications, $\beta$ would have to be taken as a complex-valued function in order to model the physical properties of the boundaries considered. Positive values of the real part $\Re \beta >0$ are associated with energy absorbing boundaries and those problems have been widely studied by Chandler-Wilde \cite{cw0} and Chandler-Wilde and Peplow \cite{cwp0}. If $\Re \beta =0$, the boundary is said to be passive \cite{Ka} or non-absorbing. Energy-absorbing boundaries allow the propagation of evanescent waves having an exponential decay from a source point at the boundary and in any direction. In the critical situation when $\Re \beta =0$ and $\Im \beta <0$, induced surface waves propagate being guided by the boundary and their exponential decay is strictly reduced to the cross-sectional direction (similar to a Rayleigh wave). So this is the critical case that maximizes the energy that propagates at boundary level, which is of interest for many applications in acoustics. See \cite{DMN2}, \cite{DMN1}, \cite{DMN} and the references given there.

On the one hand, we study the case when $\Re \beta =0$ and $\Im \beta < 0$, so that $\theta > 0$. These hypotheses on the impedance coefficients imply the appearance of surface waves guided by the domains infinite boundary. The Helmholtz equation $\Delta u + k^2 u = 0$ with this boundary condition is studied by Dur\'an, Muga, N\'ed\'elec \cite{DMN2}, \cite{DMN}. Looking for outgoing solutions, the authors firstly observe that the speed of an outgoing surface wave is different from the speed of the volume waves. Thus, a usual Sommerfeld radiation condition does not describe this phenomenon. In order to exhibit the correct radiation condition, they compute the associated Green's function and analyze its far field. 

In addition, we assume either $\beta = 0$ (rigid boundary) or $\Re \beta >0$ (energy-absorbing boundary). This means that either $\theta = 0$ or $\theta \in\mathbb{C}\backslash \{0\}$ such that $\Im \theta > 0$. In applications in outdoor sound propagation, $\beta$ lies only within a restricted region of the physically feasible half-plane $\Re \beta >0$. One of the basic theoretical problems arising from a study of outdoor sound propagation is the problem of predicting the far field behavior of the sound field emitted from a monofrequency monopole point source located above a homogeneous impedance plane \cite{cs1}, \cite{cs2}, \cite{hf}, \cite{khn}, \cite{r}, \cite{Th}. In mathematical terms, the solution for this problem is the Green function for Helmholtz equation in a half-space with impedance boundary condition. See for example \cite{cw3}, \cite{cw4}, \cite{cw1}, \cite{Ha}, \cite{Ra} among others. Thus a precise evaluation of the Green's function is of great importance. Accurate and efficient calculation of propagation from a line source for the widest range of source and receiver positions as well as a generalized asymptotic expansion for the Green's function in the far field is obtained by Chandler-Wilde and Hothershall \cite{cw1}, \cite{cw2}. This problem is two-dimensional. Propagation from a point source, i.e., three-dimensional case, has been considered by Thomasson \cite{Th} among others and has been studied in more detail by Chandler-Wilde \cite{cw}.

It turns out that in the papers mentioned above, the wave number $k$ is a positive constant. The complex wave propagation phenomena involving infinite half-planes, or perturbation of them, having impedance boundary conditions is studied by Dur\'an, Hein and N\'ed\'elec \cite{DHN}. Many problems that appear in sciences and engineering motivated their study. See for example \cite{cwp0}, \cite{ht}, \cite{ma}, \cite{sa}. Desiring to treat these problems, they study a classical two-dimensional model of time-harmonic waves, based on the Helmholtz equation. As the Helmholtz equation permits to describe a wide range of different wave propagation phenomena when the oscillations act in the linear range, the understanding of these phenomena for a relatively simple model allows to study them later for more complex and specific cases.

The first goal of this paper is to study the direct problem for the Schr\"odinger equation (\ref{schro}) with the impedance boundary condition (\ref{impedance}) for $d=2,3$. We assume that $\theta \geq 0$ or $\Im \theta > 0$. We base on the works \cite{DMN2}, \cite{DMN1}-\cite{DMN}, \cite{Th}, \cite{cw1}-\cite{cw2} mentioned above. We construct the unique solution of the Robin boundary problem
\begin{equation}\label{1}
\left\{ \begin{array}{ll}
(\Delta + q(x))u= 0 & \text{in $\Rd$}\\
\\
\frac{\partial u}{\partial x_{d}} + \theta u = f & \text{over $\{x_{d} = 0\}$}\\
\\
u \quad \text{satisfies the corresponding radiation condition}.
\end{array} \right.
\end{equation}
for $d=2,3$ in terms of the Green's function $G_{\theta}$ associated to the Helmholtz equation $\Delta u + k^2 u = 0$ with impedance boundary condition (\ref{impedance}).

Observe that when $\theta = 0$, (\ref{1}) becomes into a Neumann problem. Thus proving existence and uniqueness of solution of the problem (\ref{1}) solves the direct Neumann boundary problem for the Schr\"odinger equation in half-space. As far as we know, this does not appear in the literature.

The radiation condition that we refer in the problem (\ref{1}) is established from the one that the corresponding Green's function satisfies. We see that in the case of a non-absorbing boundary, the established radiation condition is somewhat different from the usual Sommerfeld's one, due to the appearance of surface waves. In the rigid or energy-absorbing boundary, we have the usual one. See Appendix (Section \ref{appendix}) below.

We assume that the boundary data $f$ is in the weighted Sobolev space $H^{-1/2, -\delta}(\R^{d-1})$ with $\delta > 1/2$ defined by 
\begin{equation}
H^{-1/2, -\delta}(\R^{d-1}) = \left\{ u: \frac{u}{(1+|x|^{2})^{\delta/2}} \in H^{-1/2}(\R^{d-1}) \right\}.
\end{equation}
Here we use the definition of the Sobolev space in terms of the Fourier transfrom
\begin{equation}\label{sobolev}
H^{s}(\R^d) = \left\{ f\in L^2(\R^d): \int_{\R^d} |\hat{f}(\xi)|^2 (1+|\xi|^2)^s \, d\xi < \infty \right\}, \quad \quad s\in \R.
\end{equation}
Thus it follows that
\begin{equation}\label{inclusion}
H^{1/2, -\delta}(\R^{d-1}) \subset H^{-1/2, -\delta}(\R^{d-1}).
\end{equation}

We show that the problem (\ref{1}) has a unique solution $u$ belonging to the weighted Sobolev space $H^{1, -\delta}(\Rd)$ with $\delta > 1/2$. Thus using the trace theorem that characterizes the existence of the trace operators 
\begin{align}
& \gamma_0 : H^1(\Rd) \to H^{1/2}(\R^{d-1}), \, \quad \quad \quad \, \gamma_0 v = v|_{x_d = 0}\notag\\
& \gamma_1 : H^1(\Rd) \to H^{-1/2}(\R^{d-1}), \quad \quad \quad \gamma_1 v= \frac{\partial v}{\partial x_d}\Big{|}_{x_d=0}\notag
\end{align}
and the definition of the weighted Sobolev spaces as above, the trace of $u$ and its normal derivative can be defined and it holds that
\begin{align}
 u|_{x_d=0} &\in H^{1/2, -\delta}(\R^{d-1}), \notag\\
 \frac{\partial u}{\partial x_d}\Big{|}_{x_d =0} &\in H^{-1/2, -\delta}(\R^{d-1}).\notag
\end{align}
Hence, we define the set of the Cauchy data for solutions of the Schr\"odinger equation in half-space as follows,
\begin{equation}\label{cauchy}
C_{q}:= \left\{ \left(u|_{x_{d}=0}, \frac{\partial u}{\partial x_{d}}\Big{|}_{x_{d}=0}\right) : u\in H^{1,-\delta}(\Rd) \, \, \, \delta>\frac{1}{2}, \, \, \,(\Delta + q(x))u = 0 \, \, \text{in} \, \, \Rd \right\}.
\end{equation}

We then introduce the Dirichlet-to-Neumann map $\Lambda_{q}$, the Neumann-to-Dirichlet map $N_{q}$ and the Robin-to-Robin map $R_{\theta_{1}, \theta_{2}}^{q}$ associated with $(\Delta + q(x))$ on $\Rd$ as follows,

\begin{equation}\label{dnmap}
\Lambda_{q}: \left\{ \begin{array}{lcr}
H^{1/2, -\delta}(\R^{d-1}) & \longrightarrow & H^{-1/2, -\delta}(\R^{d-1}) \\
u|_{x_{d}=0} & \longmapsto &  \frac{\partial u}{\partial x_{d}}\Big{|}_{x_{d}=0},
\end{array} \right.
\end{equation}

\begin{equation}\label{ndmap}
N_{q}: \left\{ \begin{array}{lcr}
R(\Lambda_{q}) \subset H^{-1/2, -\delta}(\R^{d-1}) & \longrightarrow & H^{1/2, -\delta}(\R^{d-1}) \\
\frac{\partial u}{\partial x_{d}}\Big{|}_{x_{d}=0} & \longmapsto & u|_{x_{d}=0},
\end{array} \right.
\end{equation}
where $R(\Lambda_q)$ denotes the range of $\Lambda_q$,

\begin{equation}\label{rrmap}
\Rq: \left\{ \begin{array}{lcr}
H^{-1/2, -\delta}(\R^{d-1}) & \longrightarrow & H^{-1/2, -\delta}(\R^{d-1}) \\
\frac{\partial u}{\partial x_{d}} + \theta_{1}u\Big{|}_{x_{d}=0} & \longmapsto & \frac{\partial u}{\partial x_{d}} + \theta_{2}u\Big{|}_{x_{d}=0},
\end{array} \right.
\end{equation}
where $\theta_{1}\neq \theta_{2}$. 


The inverse Robin boundary value problem is to determine $q(x)$ in the upper half-space from knowledge of the Robin-to-Robin map $R_{\theta_1, \theta_2}^q$. As far as we know, this problem is not studied for general $\theta_1$, $\theta_2$ in the half-space. The case when $\theta_1 =0$ and $\theta_2 = \infty$ is considered by Cheney, Lassas and Uhlmann \cite{clu} in dimension $d=3$. The inverse scattering problem for the acoustic equation with both $\theta = \cot \alpha < 0$ (active boundary) and $\theta= \cot \alpha >0$ (passive boundary) and for exponentially decaying potentials $q$ with $\Im \theta > 0$, has been considered by Karamyan in \cite{Ka1}, \cite{Ka} and \cite{Ka2}, respectively. The impedance boundary problem in open bounded domain for the particular case of $\theta_1 =-\tan\alpha$, $\theta_2=\cot\alpha$ with $\alpha\in \R$ has been studied by Isaev and Novikov \cite{in1}, \cite{in2}. More precisely, the authors give stability estimates for determination of potential \cite{in1} and reconstruction of a potential \cite{in2} from the Robin-to-Robin map.

General Robin-to-Robin map for bounded Lipschitz domains $\Omega \subset \R^d$, $d\geq 2$ is considered by Gesztesy and Mitrea \cite{gm1}, \cite{gm2}, where the direct Robin boundary problem for the Schr\"odinger operator $\Delta + q$ for not necessarily real-valued potentials $q$ satisfying $q\in L^\infty(\Omega)$ is solved. More concretely, the authors study the following generalized Robin boundary value problem
\begin{equation}\label{bounded}
\left\{ \begin{array}{ll}
(\Delta + q(x))u= 0 & \text{in $\Omega$}, \quad u\in H^1(\Omega)\\
\\
\frac{\partial u}{\partial \nu} + \theta u = f & \text{on $\partial \Omega$}
\end{array} \right.
\end{equation}
for every $f\in H^{-1/2}(\partial\Omega)$, where $\nu$ denotes the outward pointing normal unit vector to $\partial\Omega$. Here $\theta$ corresponds to a more general boundary operator which in particular can be the operator of multiplication by $\theta\in\R$. Note that in the bounded domain case, one can work with the standard Sobolev space
\begin{equation}
H^s(\Omega)= \{ u\in \mathcal{D}'(\Omega) : u= U|_{\Omega} \quad \text{for some} \quad U\in H^{s}(\R^d)\}, 
\end{equation}
where $\mathcal{D}'(\Omega)$ denotes the usual set of distributions on $\Omega \subset \R^d$ and $H^s(\R^d)$ is as in (\ref{sobolev}). By the unique solvability of the problem (\ref{bounded}), one can introduce the Dirichlet-to-Neumann map $\Lambda_{q,\Omega}$, the Neumann-to-Dirichlet map $N_{q,\Omega}$ and the Robin-to-Robin map $R_{\theta_1,\theta_2,\Omega}^q$ associated with $\Delta + q$ on $\Omega$ as follows,

\begin{equation}\label{dnomega}
\Lambda_{q,\Omega}: \left\{ \begin{array}{lcr}
H^{1/2}(\partial\Omega) & \longrightarrow & H^{-1/2}(\Omega) \\
u|_{\partial\Omega} & \longmapsto &  \frac{\partial u}{\partial \nu}\big{|}_{\partial\Omega},
\end{array} \right.
\end{equation}

\begin{equation}\label{ndomega}
N_{q,\Omega}: \left\{ \begin{array}{lcr}
H^{-1/2}(\Omega) & \longrightarrow & H^{1/2}(\Omega) \\
\frac{\partial u}{\partial \nu}\big{|}_{\partial\Omega} & \longmapsto & u|_{\partial\Omega},
\end{array} \right.
\end{equation}

\begin{equation}\label{rromega}
R_{\theta_1, \theta_2, \Omega}^q: \left\{ \begin{array}{lcr}
H^{-1/2}(\Omega) & \longrightarrow & H^{-1/2}(\Omega) \\
\frac{\partial u}{\partial \nu} + \theta_{1}u\big{|}_{\partial\Omega} & \longmapsto & \frac{\partial u}{\partial \nu} + \theta_{2}u\big{|}_{\partial\Omega},
\end{array} \right.
\end{equation}
where $\theta_{1}\neq \theta_{2}$. 

The question of whether the knowledge of the Robin-to-Robin map $R_{\theta_1, \theta_2, \Omega}^q$ determines $q(x)$ in an open bounded domain $\Omega \subset \R^d$, $d\geq 2$ is also of our interest.

The main result of this manuscript can be formulated as follows.

\begin{thm}\label{main}
Let $d=2, 3$. We assume that $\theta_i \geq 0$ or $\Im \theta_i >0$, $i=1,2$ such that $\theta_1 \neq \theta_2$, $\Im q_i(x) \geq 0$ in $\Rd$ and $\Im q_i(x) > 0$ in an open and bounded domain $\Omega \subset \Rd$. Suppose the set $B$ containing the supports of $q_{1}-k^{2}$ and $q_{2}-k^{2}$ is strictly contained in the upper half-space $\Rd$ and $\Omega \subset B$. Furthermore, let $q_1, q_2 \in L^\infty (B)$. If $R_{\theta_{1}, \theta_{2}}^{q_{1}} = R_{\theta_{1}, \theta_{2}}^{q_{2}}$, then $q_{1} = q_{2}$.
\end{thm}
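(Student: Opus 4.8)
The plan is to adapt the Sylvester--Uhlmann method to the half-space, in three moves: first reduce the equality of Robin-to-Robin maps to the equality of the Cauchy data sets, then convert this into an Alessandrini-type orthogonality relation for $q_1-q_2$, and finally exhaust that relation with complex geometric optics (CGO) solutions, localized near $B$ by a Runge approximation that respects the radiation conditions.

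\emph{Reduction to Cauchy data.} Since $\theta_1\neq\theta_2$, the Robin-to-Robin map carries all of the Cauchy data. Indeed, writing $g=\frac{\partial u}{\partial x_d}+\theta_1 u\big|_{x_d=0}$ and $R^{q}_{\theta_1,\theta_2}g=\frac{\partial u}{\partial x_d}+\theta_2 u\big|_{x_d=0}$, the $2\times2$ linear system gives
\[
u\big|_{x_d=0}=\frac{g-R^{q}_{\theta_1,\theta_2}g}{\theta_1-\theta_2},\qquad
\frac{\partial u}{\partial x_d}\Big|_{x_d=0}=\frac{\theta_1\,R^{q}_{\theta_1,\theta_2}g-\theta_2\, g}{\theta_1-\theta_2}.
\]
Hence $R^{q_1}_{\theta_1,\theta_2}=R^{q_2}_{\theta_1,\theta_2}$ forces $C_{q_1}=C_{q_2}$. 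Because the direct problem (\ref{1}) is uniquely solvable, each set $C_{q_i}$ is the graph of the single-valued map $\Lambda_{q_i}$, so equality of graphs yields $\Lambda_{q_1}=\Lambda_{q_2}$.

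\emph{Orthogonality identity.} Let $u_i\in H^{1,-\delta}(\Rd)$ be radiating solutions of $(\Delta+q_i)u_i=0$. Applying Green's identity on $\Rd$ (outward normal $-e_d$ on $\{x_d=0\}$), using $\Delta u_i=-q_i u_i$ and the vanishing of the contribution at infinity guaranteed by the radiation condition and the weighted decay, one obtains up to sign
\[
\int_{\Rd}(q_1-q_2)\,u_1 u_2\,dx=\int_{\{x_d=0\}}\Big(u_2\tfrac{\partial u_1}{\partial x_d}-u_1\tfrac{\partial u_2}{\partial x_d}\Big)\,dx'.
\]
Replacing the normal derivatives by $\Lambda_{q_i}(u_i|_{x_d=0})$, invoking $\Lambda_{q_1}=\Lambda_{q_2}$ together with the symmetry $\int (\Lambda_q f)h=\int(\Lambda_q h)f$ of the Schr\"odinger Robin-to-Robin/Dirichlet-to-Neumann map, the right-hand side vanishes. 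Since $q_1-q_2$ is supported in $B$, this leaves $\int_{B}(q_1-q_2)\,u_1u_2\,dx=0$ for \emph{all} admissible pairs $u_1,u_2$.

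\emph{CGO and Runge approximation.} Near $B$ the operator is the genuine $\Delta+q_i$, so on a neighborhood of $B$ I construct CGO solutions $u_i=e^{x\cdot\zeta_i}(1+\psi_i)$ with $\zeta_i\cdot\zeta_i=-k^2$, $\zeta_1+\zeta_2=i\xi$, and $\|\psi_i\|\to0$; for $d=3$ one lets $|\zeta_i|\to\infty$ with $\xi$ fixed, while for $d=2$ the moduli cannot be driven to infinity at fixed $\xi$ and one uses instead Bukhgeim's quadratic-phase CGO and stationary phase. These are only local solutions (they grow exponentially and violate the radiation condition), so they are not themselves elements of $C_{q_i}$. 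The remedy is a Runge approximation: the restrictions to $B$ of global radiating solutions are dense in $L^2(B)$ among local solutions of $(\Delta+q_i)v=0$, which follows by a Hahn--Banach/duality argument from the unique continuation property for $\Delta+q_i$ with $q_i\in L^\infty(B)$. Approximating the CGO solutions by global ones and passing to the limit in the orthogonality relation gives $\int_B (q_1-q_2)\,e^{ix\cdot\xi}\,dx=0$ for every $\xi$, i.e. $\widehat{q_1-q_2}\equiv0$, whence $q_1=q_2$.

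The main obstacle is precisely this last step: making the Runge approximation (and hence the CGO exhaustion) compatible with the nonstandard radiation conditions, in particular the surface-wave condition in the regime $\theta>0$. The density of traces of global radiating solutions rests on a well-posed adjoint solve in the weighted spaces $H^{1,-\delta}(\Rd)$ realized through the half-space Green's function $G_\theta$, together with unique continuation up to the boundary $\{x_d=0\}$; and one must control the CGO remainder $\psi_i$ uniformly on a fixed neighborhood of $B$, with the separate quadratic-phase estimates required when $d=2$.
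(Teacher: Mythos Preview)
Your overall strategy matches the paper's: reduce $R^{q_1}_{\theta_1,\theta_2}=R^{q_2}_{\theta_1,\theta_2}$ to $\Lambda_{q_1}=\Lambda_{q_2}$, then derive the orthogonality relation $\int_B(q_1-q_2)v_1v_2\,dx=0$ and exhaust it with CGO solutions via an approximation argument. The execution differs in two places. For the first reduction the paper introduces $S=R^q_{\theta_1,\theta_2}-I$, proves $S(\Lambda_q+\theta_1)=(\theta_2-\theta_1)I$ and that $S$ is a bijection, and deduces $\Lambda_{q_1}=\Lambda_{q_2}$ from injectivity of $S$; your direct $2\times 2$ inversion of $(g,R^q g)\mapsto(u|_{x_d=0},\partial_{x_d}u|_{x_d=0})$ is cleaner and reaches the same conclusion without the operator-algebraic detour. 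For the second step the paper follows Cheney--Lassas--Uhlmann, working through the perturbed Dirichlet Green's function $G^D_q$ to identify the kernel of $\Lambda_q$ and then using linear combinations of $G^D_q(\cdot,y)$ to approximate local solutions on $B$; you phrase the same content as Green's formula plus a Runge/Hahn--Banach density argument. One point to sharpen: for $d=2$ with merely $q_i\in L^\infty(B)$, Bukhgeim's original quadratic-phase CGO result needs piecewise $W^{1,p}$ regularity with $p>2$, so you should invoke the Imanuvilov--Yamamoto version (as the paper does) to match the hypotheses of the theorem.
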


The theorem is also true for bounded domains. In fact, we can also show the following result. 

\begin{thm}\label{domain}
Let $d\geq 2$ and $\theta_i \in \R$, $i=1,2$ with $\theta_1 \neq \theta_2$. Let $\Omega \subset \R^d$ be a open bounded domain with smooth boundary. Let $q_i\in L^{\infty}(\Omega)$, $i=1,2$. If $R_{\theta_1, \theta_2, \Omega}^{q_1} = R_{\theta_1,\theta_2,\Omega}^{q_2}$, then $q_1 = q_2$.
\end{thm}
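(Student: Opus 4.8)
The plan is to first convert the hypothesis on the Robin-to-Robin maps into the statement that $q_1$ and $q_2$ have identical Cauchy data, and then to run the standard complex geometric optics argument. For the reduction, fix any $u_1 \in H^1(\Omega)$ solving $(\Delta + q_1)u_1 = 0$ and let $u_2$ be the solution of $(\Delta + q_2)u_2 = 0$ with the same $\theta_1$-Robin trace, so that $\frac{\partial u_2}{\partial\nu} + \theta_1 u_2 = \frac{\partial u_1}{\partial\nu} + \theta_1 u_1$ on $\partial\Omega$; such a $u_2$ exists by the unique solvability of the Robin problem \eqref{bounded} (see \cite{gm1}, \cite{gm2}). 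The assumption $R_{\theta_1,\theta_2,\Omega}^{q_1} = R_{\theta_1,\theta_2,\Omega}^{q_2}$ then forces the $\theta_2$-Robin traces to agree as well, that is $\frac{\partial u_2}{\partial\nu} + \theta_2 u_2 = \frac{\partial u_1}{\partial\nu} + \theta_2 u_1$ on $\partial\Omega$. Subtracting these two boundary identities and using $\theta_1 \neq \theta_2$ gives $u_1|_{\partial\Omega} = u_2|_{\partial\Omega}$ and $\frac{\partial u_1}{\partial\nu} = \frac{\partial u_2}{\partial\nu}$ on $\partial\Omega$. Hence every $q_1$-solution possesses a $q_2$-solution with identical Dirichlet and Neumann traces, and by symmetry the converse holds, so the Cauchy data sets of $q_1$ and $q_2$ coincide.

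Next I would derive the orthogonality relation. For arbitrary $H^1(\Omega)$ solutions $v_1$ of $(\Delta + q_1)v_1 = 0$ and $v_2$ of $(\Delta + q_2)v_2 = 0$, Green's formula together with $\Delta v_i = -q_i v_i$ yields
\begin{equation}
\int_\Omega (q_1 - q_2)\, v_1 v_2 \, dx = \int_{\partial\Omega}\left( v_1 \frac{\partial v_2}{\partial\nu} - v_2 \frac{\partial v_1}{\partial\nu}\right) dS.
\end{equation}
To see that the boundary term vanishes, replace the Cauchy data of $v_1$ by that of the $q_2$-solution sharing its traces (available by the first step) and apply Green's identity to the two resulting solutions of the $q_2$-equation; since both satisfy the same equation the corresponding bilinear form is zero. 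This gives $\int_\Omega (q_1 - q_2)\, v_1 v_2\, dx = 0$ for all such $v_1$, $v_2$.

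Finally I would extend $q_1 - q_2$ by zero to all of $\R^d$ and substitute complex geometric optics solutions into this identity. For $d \geq 3$ I would take the Sylvester--Uhlmann solutions $v_j = e^{x\cdot\zeta_j}\bigl(1 + \psi_{\zeta_j}\bigr)$, where $\zeta_j \in \mathbb{C}^d$ satisfy $\zeta_j \cdot \zeta_j = 0$ and $\zeta_1 + \zeta_2 = i\xi$ for a prescribed frequency $\xi \in \R^d$, and where $\|\psi_{\zeta_j}\|_{L^2} \to 0$ as $|\zeta_j| \to \infty$. Passing to the limit $|\zeta_j| \to \infty$ in the orthogonality relation produces $\widehat{(q_1 - q_2)}(\xi) = 0$ for every $\xi$, whence $q_1 = q_2$. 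In the remaining case $d = 2$, where exponential CGO solutions for merely bounded potentials are unavailable, I would instead use Bukhgeim-type CGO solutions built from a holomorphic (Morse) phase and conclude via a stationary-phase analysis.

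The main obstacle is the CGO step, and specifically the low-regularity two-dimensional case: for $L^\infty$ potentials in the plane the elementary large-parameter remainder estimates that suffice when $d \geq 3$ break down, and one must appeal to the finer stationary-phase arguments of Bukhgeim type. By contrast, the reduction to equal Cauchy data is an elementary consequence of $\theta_1 \neq \theta_2$, and the orthogonality identity is a routine application of Green's formula.
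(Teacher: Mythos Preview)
Your argument is correct and follows the same overall plan as the paper: reduce equality of Robin-to-Robin maps to equality of Cauchy data, then invoke CGO solutions (Sylvester--Uhlmann for $d\geq 3$, and in $d=2$ the Bukhgeim-type solutions, which the paper takes in the Imanuvilov--Yamamoto form \cite{iy}). The only difference lies in the packaging of the reduction step. The paper argues at the operator level: it introduces $S = R_{\theta_1,\theta_2}^{q} - I$, proves in Lemma~\ref{lema2} the identity $S(\Lambda_q+\theta_1)=(\theta_2-\theta_1)I$ together with the injectivity of $S$, and then from $R_{\theta_1,\theta_2}^{q_1}=R_{\theta_1,\theta_2}^{q_2}$ deduces $\Lambda_{q_1}=\Lambda_{q_2}$ (see Remark~\ref{remarkbounded}). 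You instead work directly with a pair of solutions sharing the same $\theta_1$-Robin trace and subtract the two Robin boundary identities; this is the same arithmetic unwound at the level of individual solutions. Your route is slightly more elementary and does not require the Dirichlet-to-Neumann map to be well defined as an operator (i.e.\ it avoids any implicit assumption that the Dirichlet problem is uniquely solvable), while the paper's operator formulation yields as a byproduct the explicit inverse $S^{-1}=(\theta_2-\theta_1)^{-1}(\Lambda_q+\theta_1)$.
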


The main idea of the proof is to reduce the Robin-to-Robin problem to a more studied Dirichlet-to-Neumann problem. We first prove that if $R_{ \theta_1, \theta_2}^{q_1}f = R_{\theta_1, \theta_2}^{q_2}f$ for a given data $f \in H^{-1/2, -\delta}(\R^{d-1})$, then $\Lambda_{q_1}g = \Lambda_{q_2}g$ for every $g\in H^{1/2,-\delta}(\R^{d-1})$. The same reasoning applies to the bounded domain case. Indeed, the same argument works for showing that the fact that $R_{\theta_1, \theta_2, \Omega}^{q_1} = R_{\theta_1, \theta_2, \Omega}^{q_2}$ implies that $\Lambda_{q_1,\Omega} = \Lambda_{q_2, \Omega}$. Thus our problem reduces to determine $q(x)$ from the knowledge of the Dirichlet-to-Neumann maps $\Lambda_q$ and $\Lambda_{q,\Omega}$. 

When $d=3$, it is known \cite{clu} that knowledge of $\Lambda_{q}$ uniquely determines $q(x)$ in the half-space geometry. As far as we know, the problem in the half-space is not studied in the two dimensional case. However, same method as in \cite{clu} allows us to extend the result to the half-plane. See Section \ref{clu2d} below. In the bounded domain case, it is known that knowledge of $\Lambda_{q, \Omega}$ uniquely determines the potential $q(x)$. For $d\geq 3$, see the seminal work of Sylvester and Uhlmann \cite{su}. The two dimensional case was open until the paper of Bukhgeim \cite{Bu} in 2008 for piecewise $W^{1,p}$ potentials with $p>2$ and later improved to $W^{\varepsilon, p}$ potentials with $\varepsilon >0$ by Bl\aa sten \cite{Bl1}. The final result is given by Imanuvilov and Yamamoto \cite{iy} for $q\in L^\infty$ and we use this in the sequel.

The paper is organized as follows. In Section \ref{1.1} we study the direct impedance boundary problem in half-space $\Rd$. Existence and uniqueness of solution of the problem (\ref{1}) for $d=2,3$ is proved. Section \ref{prooftheorem} is devoted to the proof of Theorem \ref{main}. We first give a detailed proof of how to reduce the Robin-to-Robin problem to the Dirichlet-to-Neumann one in $\Rd$ for a general dimension $d$. Same argument works for bounded domain case. See Remark \ref{remarkbounded} below. We then prove that $\Lambda_q$ uniquely determines $q$ in the half-plane. Finally, in Section \ref{appendix} (Appendix) we include a small review of the papers \cite{DMN2}, \cite{DMN1}-\cite{DMN}, \cite{cw} and \cite{cw1}-\cite{cw2} pointing out the main properties of the Green's function associated to the Helmholtz equation in $\R^3_{+}$ and $\R^2_{+}$, respectively.

\section*{Acknowledgements} 
The authors would like to express their gratitude to Katya Krupchyk for helpful and useful comments on the paper. 

\section{The direct problem}\label{1.1}

In this section we study the unique solvability of the Robin boundary problem (\ref{1}) for $d=2,3$ and for the impedance parameter $\theta$ such that $\theta >0$ (non-absorbing boundary), $\theta = 0$ (rigid boundary) and $\Im \theta >0$ (energy-absorbing boundary). 

The proof relies very much on the unperturbed Robin Green's functions given in Appendix (Section \ref{appendix}). Since there are some differences in their asymptotics and the corresponding radiation condition depending on the boundary that we consider, we split the section into two parts.

We first study the non-absorbing case $\theta >0$ in detail. Afterwards, since the rigid $\theta=0$ and energy absorbing case $\Im \theta > 0$ follows by the same method, we only focus on the differences respect to the first case. Finally, we give the existence and uniqueness result for both cases.

\subsection{Non-absorbing boundary ($\theta >0$)}\label{uni3}

Let $d=2,3$. Let us consider the Robin boundary problem
\begin{equation}\label{12}
\left\{ \begin{array}{ll}
(\Delta + q(x))u= 0 & \text{in $\Rd$}\\
\\
\frac{\partial u}{\partial x_{d}} + \theta u = f & \text{over $\{x_{d} = 0\}$},
\end{array} \right.
\end{equation}
where $u$ also satisfies the radiation condition given by
\begin{equation}\label{RC3}
\left\{ \begin{array}{ll}
\left| \frac{\partial u}{\partial r} - iku\right| < \frac{C}{r^{\left(2\alpha + \frac{1}{2} \right)}} & \text{in $\R^3_{+}(\alpha_{+}) = \{x_{3} > r^{\alpha}\}$}\\
\\
\left|\frac{\partial u}{\partial r} - i \sqrt{k^{2} + \theta^{2}}u \right| < \frac{C}{r^{\left(\frac{3}{2} -\alpha \right)}} & \text{in $\R^3_{+}(\alpha_{-}) = \{0 \leq  x_{3} < r^{\alpha} \}$},
\end{array} \right.
\end{equation}
when $r \to + \infty$, for any $\alpha \in \left(\frac{1}{4}, \frac{1}{2} \right)$ and
\begin{equation}\label{RC2}
\left\{ \begin{array}{ll}
\left| \frac{\partial u}{\partial r} - iku\right| < \frac{C}{r^{1+\alpha}} & \text{in $\R^2_{+}(\alpha_{+}) = \{x_{2} > r^{\alpha}\}$}\\
\\
\left|\frac{\partial u}{\partial r} - i \sqrt{k^{2} + \theta^{2}}u \right| < \frac{C}{r^{1+\alpha}} & \text{in $\R^2_{+}(\alpha_{-}) = \{0 \leq  x_{2} < r^{\alpha} \}$},
\end{array} \right.
\end{equation}
when $r \to + \infty$, for any $\alpha \in \left(0,\frac{1}{2} \right)$.

Our first goal is to construct the unique solution of the problem (\ref{12}) using the Green's function given by Dur\'an, Muga and  N\'ed\'elec \cite{DMN2}, \cite{DMN1}-\cite{DMN}. See Appendix \ref{sectiongreen} below.

Let $G_{\theta}(x,y)$ denote the unperturbed Robin Green's function satisfying the boundary problem
\begin{equation}\label{G1}
\left\{ \begin{array}{ll}
(\Delta + k^{2})G_{\theta}(x, y)= -\delta(x-y) & \text{in $\Rd$}\\
\\
\frac{\partial G_{\theta}(x,y)}{\partial x_{d}} + \theta G_{\theta}(x,y) = 0 & \text{over $\{x_{d} = 0\}$}\\
\\
G_{\theta} \, \text{satisfies the radiation condition (\ref{RC3}) or (\ref{RC2})},
\end{array} \right.
\end{equation}
for $x, y\in \Rd$.

We construct the solution to the problem (\ref{12}) as the solution to the following integral equation
\begin{equation}\label{+}
u(x) =  \frac{1}{2 C_\pi C'_\pi} \int_{y_{d}=0}  G_{\theta}(x,y)f(y)\, d\sigma(y) + \int_{y_{d} > 0} G_{\theta}(x,y) \left( q(y) - k^{2} \right)u(y)\, dy,
\end{equation}
where $C_\pi = \frac{1}{\sqrt{8\pi}}, \, C'_\pi= \frac{1}{4\pi}$ if $d=2$ and $C_\pi = \frac{1}{4\pi},\,  C'_\pi = \frac{1}{2\pi}$ when $d=3$.

We first need to show that $u$ has the desired properties. It clearly satisfies the corresponding radiation condition. To see that the Robin boundary condition $\frac{\partial u}{\partial x_d} + \theta u = f$ holds at $x_d = 0$, observe that the entire contribution to $u$ comes from the first term on the right-hand side of (\ref{+}) because of the Robin boundary condition that satisfies the Green's function if $x_d = 0$ and $y_d >0$. Thus we need to check that when $y_d = 0$, we obtain
\begin{equation}\label{poisson0}
\frac{\partial G_\theta(x,y)}{\partial x_d} + \theta G_\theta(x,y) = 2 C_\pi C'_\pi\delta(x'-y') \quad \quad \text{over} \quad \{x_d = 0\},
\end{equation}
where $x=(x', x_d)$, $y=(y', y_d)$ and $C_\pi, \, C'_\pi$ are as above. For this purpose, from (\ref{fouriergreen}) and (\ref{spatialrobin}), observe that on the surface $y_d=0$ yields
\begin{equation}\notag
G_\theta(x,y) = 2 C_\pi C'_\pi\int_{\R^{d-1}}\frac{1}{\theta -\sqrt{\xi^2 - k^2}}e^{-\sqrt{\xi^2 - k^2}x_d}e^{-i\xi\cdot (x'-y')} \, d\xi
\end{equation}
and
\begin{equation}\notag
 \frac{\partial G_\theta(x,y)}{\partial x_d} = -2C_\pi C'_\pi\int_{\R^{d-1}}\frac{\sqrt{\xi^2 - k^2}}{\theta -\sqrt{\xi^2 - k^2}}  e^{-\sqrt{\xi^2 - k^2}x_d}e^{-i\xi\cdot (x'-y')} \, d\xi.
\end{equation}
Hence, evaluating at $x_d = 0$, (\ref{poisson0}) follows.

\begin{remark}
We point out that when dealing with Robin Green's function $G_\theta$, by (\ref{fouriergreen}) and (\ref{spatialrobin}) the solution $u$ can be also defined as the solution of the integral equation 
\begin{equation}\label{+'}
u(x) = - \frac{1}{2\theta C_\pi C'_\pi} \int_{y_{d}=0} \frac{ \partial G_{\theta}(x,y)}{\partial y_d}f(y)\, d\sigma(y) + \int_{y_{d} > 0} G_{\theta}(x,y) \left( q(y) - k^{2} \right)u(y)\, dy,
\end{equation}
being $C_\pi$ and $C'_\pi$ as above and $\theta \neq 0$. It is easy to check that when $y_d = 0$, yields
\begin{equation}\label{poisson}
\frac{\partial}{\partial x_d}\frac{\partial G_\theta(x,y)}{\partial y_d} + \theta\frac{\partial G_\theta(x,y)}{\partial y_d} = -2\theta C_\pi C'_\pi\delta(x'-y') \quad \quad \text{over} \quad \{x_d = 0\}.
\end{equation}
This follows by showing that on the surface $y_d=0$, we have
\begin{equation}\notag
\frac{\partial G_\theta(x,y)}{\partial y_d} = -C_\pi C'_\pi\int_{\R^{d-1}}\frac{2\theta}{\theta -\sqrt{\xi^2 - k^2}}e^{-\sqrt{\xi^2 - k^2}x_d}e^{-i\xi\cdot (x'-y')} \, d\xi
\end{equation}
and
\begin{equation}\notag
\frac{\partial}{\partial x_d} \frac{\partial G_\theta(x,y)}{\partial y_d} = C_\pi C'_\pi\int_{\R^{d-1}}\frac{2\theta\sqrt{\xi^2 - k^2}}{\theta -\sqrt{\xi^2 - k^2}}  e^{-\sqrt{\xi^2 - k^2}x_d}e^{-i\xi\cdot (x'-y')} \, d\xi,
\end{equation}
which evaluating at $x_d = 0$ implies (\ref{poisson}).
\end{remark}

In order to prove existence and uniqueness of solution of (\ref{12}), we first need to show that equation (\ref{+}) is uniquely solvable in a suitable function space. Indeed, we prove that (\ref{+}) has a unique solution in $H^{1,-\delta}(\Rd)$ with $\delta>\frac{1}{2}$ by applying the Fredholm alternative. For simplicity of notation, we write $V(y) = q(y) - k^{2}$ and let $G_{\theta}V$ stand for the operator given by 
$$
G_{\theta}V u(x)= \int_{y_{d} > 0} G_{\theta}(x,y) V(y) u(y) \, dy.
$$

\begin{pro}\label{pro1}
Let $d=2,3$. If $V$ is a bounded function of compact support, the operator $G_{\theta}V$ is compact in $H^{1,-\delta}(\Rd)$ for any $\delta>\frac{1}{2}$.
\end{pro}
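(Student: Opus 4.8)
The plan is to establish compactness of the operator $G_\theta V$ on the weighted Sobolev space $H^{1,-\delta}(\Rd)$ by factoring it through a compact Sobolev-type embedding, exploiting that $V$ has compact support. The overall strategy is: first show that $G_\theta V$ maps $H^{1,-\delta}(\Rd)$ boundedly into a space that embeds \emph{compactly} back into $H^{1,-\delta}(\Rd)$. The compact support of $V$ is the crucial ingredient: it confines the "input" of the integral operator to a fixed bounded set $B \supset \operatorname{supp} V$, and the weight $(1+|x|^2)^{-\delta/2}$ with $\delta > 1/2$ controls the behaviour at infinity.

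First I would set up the mapping properties. For $u \in H^{1,-\delta}(\Rd)$, the function $Vu$ is supported in the fixed compact set $B$, and since $V \in L^\infty$ one has $Vu \in L^2(B)$ with $\|Vu\|_{L^2(B)} \lesssim \|u\|_{H^{1,-\delta}}$ (the weight is bounded below on $B$). Thus it suffices to analyse the map $g \mapsto \int_{y_d>0} G_\theta(x,y) g(y)\,dy$ from $L^2$-functions supported in $B$ into $H^{1,-\delta}(\Rd)$. Here I would invoke the explicit Green's function and its asymptotics recorded in the Appendix (Section \ref{appendix}): away from the diagonal $G_\theta(\cdot,y)$ is smooth, and near the diagonal it has the standard $|x-y|^{-(d-2)}$ (or logarithmic in $d=2$) singularity of the Helmholtz fundamental solution plus a smoother reflected part. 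Because $(\Delta + k^2) G_\theta(\cdot,y) = -\delta(\cdot - y)$, elliptic regularity shows that $G_\theta * (Vu)$ gains two derivatives off the support structure, so I would aim to show $G_\theta V u \in H^2_{\mathrm{loc}}$ with the weighted $H^{2,-\delta}$ norm controlled by $\|u\|_{H^{1,-\delta}}$, using the decay of $G_\theta$ and its derivatives from the radiation condition to handle the weight at infinity.

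The compactness then follows from a Rellich-type argument adapted to weighted spaces. I would split $\Rd$ into a large ball $B_N$ and its complement. On $B_N$, the bound $G_\theta V : H^{1,-\delta} \to H^2(B_N)$ together with the compact embedding $H^2(B_N) \hookrightarrow H^1(B_N)$ yields compactness of the local piece. On the complement, the weight $(1+|x|^2)^{-\delta/2}$ together with the decay estimates for $G_\theta(x,y)$ when $x$ is far from $B$ makes the tail contribution to the $H^{1,-\delta}$ norm uniformly small as $N \to \infty$. Combining a uniformly convergent sequence of compact truncated operators with a uniformly small tail shows that $G_\theta V$ is the operator-norm limit of compact operators, hence compact.

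The main obstacle I expect is the interplay between the non-standard radiation condition (\ref{RC3})--(\ref{RC2}) and the weighted estimates at infinity: in the non-absorbing case the surface waves guided along $\{x_d = 0\}$ decay only in the cross-sectional direction, so the kernel $G_\theta(x,y)$ does \emph{not} enjoy uniform radial decay, and one must argue region by region (the regimes $\Rgehi$ and $\Rken$ of (\ref{RC3})) to verify that the weight $\delta > 1/2$ still suppresses the tail. Keeping track of the precise decay rates of $G_\theta$ and its first derivatives in each of these regions, as extracted from the Appendix, is the delicate part; the near-diagonal singularity and the elliptic gain of regularity are comparatively routine.
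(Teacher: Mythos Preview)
Your strategy is sound and would lead to a correct proof, but it places the compactness in a different factor than the paper does. The paper follows Agmon \cite{A}: it observes that multiplication by the compactly supported $V$ is already a \emph{compact} map $H^{1,-\delta}(\Rd)\to L^{2,\delta}(\Rd)$ (this is Rellich on the fixed support of $V$), so that one only needs $G_\theta\chi$ to be \emph{bounded} from $L^{2,\delta}$ into $H^{1,-\delta}$. That boundedness is checked by direct kernel estimates: the single global bound $|G_\theta(x,y)|\le C|x-y|^{-(d-1)/2}$, read off from the asymptotics in the Appendix and valid uniformly, makes the weighted kernel Hilbert--Schmidt on $L^2$, and the first-order derivatives $\partial_{x_i}G_\theta$ are handled by a combination of Hilbert--Schmidt estimates and a Schur test. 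Your route instead treats multiplication by $V$ as merely bounded into $L^2(B)$ and then manufactures compactness on the integral-operator side via an elliptic gain to $H^{2}_{\mathrm{loc}}$ together with a truncation/Rellich argument on the output. This is correct but does more work: you need $H^2$ control locally and weighted $H^1$ tail decay, whereas the paper only needs $H^1$ boundedness of the convolution and gets the compactness for free at the input. In particular, your worry about the surface-wave region in (\ref{RC3})--(\ref{RC2}) is dealt with in the paper not by a region-by-region tail analysis but simply through the uniform bound $|G_\theta(x,y)|\le C|x-y|^{-(d-1)/2}$, which already absorbs the slow transverse decay of the guided wave.
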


\begin{proof}

From the assumption that $V$ has compact support in the lower half-space, we write $G_{\theta}V$ as $G_{\theta}\chi V$, where $\chi$ is the function that is one on the support of $V$ and zero everywhere else. Let $r_1, r_2 >0$ such that $supp \, V \subset \Omega \subset \{ r_1 \leq |y| \leq r_2\}$.

As in Agmon \cite{A}, we first note that multiplication by $V$ is a compact operator mapping $H^{1,-\delta}(\Rd)$ into $L^{2,\delta}(\Rd)$ and then show that the operator $G_{\theta}\chi$  is a bounded operator mapping $L^{2,\delta}(\Rd)$ into $H^{1,-\delta}(\Rd)$. Hence the product operator $G_{\theta}\chi V = G_{\theta}V$ is a compact operator on $H^{1,-\delta}(\Rd)$.

Multiplication by $V$ is a compact operator from $H^{1,-\delta}$ to $L^{2,\delta}$ by the Sobolev embedding theorem.

To show that $G_{\theta}\chi$ is bounded from $L^{2,\delta}$ into $H^{1,-\delta}$, we use a functional analysis approach. Let
\begin{equation}
k(x,y) = \frac{G_\theta(x,y)\chi(y)}{(1+|x|^2)^{\delta/2}(1+|y|^2)^{\delta/2}}
\end{equation}
denote a function $k:\Rd \times \Rd \to \mathbb{C}$ and we define the associated integral operator
$$
Kf(x)=\int_{\Rd} k(x,y)f(y)dy.
$$
We need to prove that $K: L^2 \to H^1$ is bounded.

We first show that for $\delta > 1/2$ 
\begin{equation}\label{goal21}
\int_{\Rd}\int_{\Rd} |k(x,y)|^2 \, dy \, dx < +\infty.
\end{equation}
This implies that $K:L^2 \to L^2$ is a Hilbert Schmidt operator and hence, compact and bounded. Secondly, we prove that the integral kernel 
$$
h_i(x,y)= \frac{\partial}{\partial x_i}k(x,y)\quad \quad \quad i=1,\ldots, d
$$
defines, as well, a bounded integral operator $H_i$ from $L^2$ to $L^2$. This together with \eqref{goal21} shows that $K: L^2 \to H^1$ is bounded.

To show \eqref{goal21} note that by (\ref{grb20}), (\ref{grb21}), (\ref{grb}), the Robin Green's function holds 
\begin{equation}
|G_\theta(x,y)|\leq \frac{C}{|x-y|^{\frac{d-1}{2}}}.
\end{equation}
Let $\varphi \in C_0^\infty(\Rd)$ be such that $\varphi(x)=1$ for $|x|\leq r_2 + 1$ and $\varphi(x)=0$ for $|x|\geq 2r_2 + 1$. Thus for any $\delta > 0$ and $d=2,3$, we get
\begin{align}\notag
\int_{\Rd} \int_{\Rd} |\varphi(x)k(x,y)|^2 \, dx\, dy & \leq C \int_{\Omega} \frac{dy}{(1+|y|^2)^\delta} \int_{\tilde{\Omega}\cap \{|x-y|\leq 1\}} \frac{dx}{|x-y|^{d-1}}\notag\\ 
& + \int_{\Omega} \frac{dy}{(1+|y|^2)^\delta} \int_{\tilde{\Omega}\cap \{|x-y|\geq 1\}}  \frac{dx}{(1+|x|^2)^\delta} < \infty,\notag
\end{align}
where $\tilde{\Omega}$ is a bounded domain containing the support of $\varphi$.
In addition, since $1-\varphi(x) = 0$ when $|x|\leq r_2 +1$ and in the support of $V$, using that $|x-y|\geq |x|-r_2$, we have
\begin{align}\notag
\int_{\Rd} \int_{\Rd} |(1-\varphi(x))k(x,y)|^2 \, dx\, dy & \leq C \int_\Omega \frac{dy}{(1+|y|^2)^\delta}\int_{|x|\geq r_2 +1} \frac{dx}{|x|^{2\delta}|x-y|^{d-1}}\\
& \leq C\int_{|x|\geq r_2 +1} \frac{dx}{||x|-r_2|^{2\delta +d-1}} < \infty,\notag 
\end{align}
for $2\delta - 1 >0$ i.e. $\delta > 1/2$.

To prove the claim for $H_i$ note that
\begin{equation}\notag
|h_i(x,y)| \leq \frac{C}{(1+|y|^2)^{\delta/2}}\left(\frac{1}{(1+|x|^2)^{\frac{\delta}{2}}|x-y|^{\frac{d+1}{2}}} + \frac{1}{(1+|x|^2)^{\frac{\delta+1}{2}}|x-y|^{\frac{d-1}{2}}} \right).
\end{equation}
The second term of the above inequality is square integrable for all $\delta >0$ and defines thus a Hilbert Schmidt operator, as above. Similarly, $(1-\varphi(x))h_i(x,y)$ is square integrable for any $\delta >0$. The proof is completed by applying the Schur test to the first term of $\varphi(x)h_i(x,y)$, i.e. checking that 
\begin{equation}\notag
\sup_{|x|\leq 2r_2+1} \frac{1}{(1+|x|^2)^{\delta/2}} \int_{|y|\leq r_2} \frac{dy}{(1+|y|^2)^{\delta/2}|x-y|^{\frac{d+1}{2}}} < \infty
\end{equation}
and
\begin{equation}\notag
\sup_{|y|\leq r_2} \frac{1}{(1+|y|^2)^{\delta/2}} \int_{|x|\leq 2r_2+1} \frac{dx}{(1+|x|^2)^{\delta/2}|x-y|^{\frac{d+1}{2}}} < \infty.
\end{equation}
These inequalities are true for all $\delta >0$ and $d=2,3$ since
\begin{align}\notag
\sup_{|x|\leq R} \int_{|y|\leq R'} \frac{dy}{|x-y|^{\frac{d+1}{2}}} \leq \sup_{|x|\leq R} \int_{|z|\leq R+R'} \frac{dz}{|z|^{\frac{d+1}{2}}} < \infty
\end{align}
for any $R, R' >0$ and $d>1$.

\end{proof}

Now we are ready to prove the uniqueness result for the integral equation (\ref{+}).

\begin{pro}\label{++}
Let $d=2,3$, $\Im q(x) \geq 0$ in $\Rd$ and $\Im q(x) >0$ in an open and bounded domain $\Omega \subset \Rd$. Suppose that $V$ is a bounded function with compact support. Then equation (\ref{+}) has a unique solution $u$ in $H^{1,-\delta}(\Rd)$.
\end{pro}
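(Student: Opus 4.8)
The plan is to apply the Fredholm alternative. Equation (\ref{+}) has the form $u = u_0 + G_\theta V u$, where $u_0$ is the term built from the boundary data $f$, and $G_\theta V$ is compact on $H^{1,-\delta}(\Rd)$ by Proposition \ref{pro1}. Hence unique solvability for every right-hand side is equivalent to injectivity of $I - G_\theta V$, so it suffices to show that the only solution $u\in H^{1,-\delta}(\Rd)$ of the homogeneous equation $u = G_\theta V u$ is $u=0$. Applying $\Delta + k^2$ to this identity and using $(\Delta_x + k^2)G_\theta(x,y) = -\delta(x-y)$ gives $(\Delta + k^2)u = -Vu$, that is $(\Delta + q)u = 0$ in $\Rd$; since $G_\theta$ satisfies the homogeneous Robin condition for $x_d=0$, $y_d>0$, the trace of $u$ satisfies $\frac{\partial u}{\partial x_d} + \theta u = 0$ on $\{x_d=0\}$; and $u$ inherits the radiation condition (\ref{RC3})/(\ref{RC2}) from $G_\theta$. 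Elliptic regularity (with $Vu\in L^2$ of compact support) yields $u\in H^2_{\mathrm{loc}}$, which is enough to integrate by parts.

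Next I would run a Rellich-type energy argument. Multiplying $(\Delta + q)u = 0$ by $\bar u$ and applying Green's identity on the half-ball $\bken = \{|x|<R\}\cap\Rd$, the flat boundary term equals $\theta\int_{\{x_d=0,\,|x|<R\}}|u|^2$, which is real because $\theta>0$ and therefore disappears upon taking imaginary parts. What remains is
\[ \Im\int_{\sken}\bar u\,\partial_r u\,dS = -\int_{\bken}\Im q\,|u|^2\,dx \le 0, \]
using $\Im q\ge 0$. As $R\to\infty$ the right-hand side converges to $-\int_{\Rd}\Im q\,|u|^2 =: -I\le 0$, the integral being finite since $\Im q$ is compactly supported.

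The heart of the proof is to feed the radiation condition into the left-hand side. I would split $\sken = \sbat\cup\sbi$ with $\sbat = \sken\cap\Rgehi$ and $\sbi = \sken\cap\Rken$, and write $\partial_r u = iku + (\partial_r u - iku)$ on $\sbat$ and $\partial_r u = i\sqrt{k^2+\theta^2}\,u + (\partial_r u - i\sqrt{k^2+\theta^2}\,u)$ on $\sbi$. Since $k$ and $\sqrt{k^2+\theta^2}$ are positive reals, the leading terms contribute $k\int_{\sbat}|u|^2 + \sqrt{k^2+\theta^2}\int_{\sbi}|u|^2 \ge k\,m(R)$, where $m(R) = \int_{\sken}|u|^2$. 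For the remainders I would use Cauchy--Schwarz with the pointwise bounds in (\ref{RC3})/(\ref{RC2}) and the surface measures $|\sbat|\sim R^{d-1}$, $|\sbi|\sim R^{(d-2)+\alpha}$; checking that the resulting exponents are negative for $\alpha\in(1/4,1/2)$ when $d=3$ and $\alpha\in(0,1/2)$ when $d=2$ shows each remainder is bounded by $o(1)\,m(R)^{1/2}$. Combining with $-I\le 0$ gives $k\,m(R)\le o(1)\,m(R)^{1/2}$, hence $m(R)\to 0$, and feeding this back forces $I=0$.

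Finally I would conclude by unique continuation. Since $\int_{\Rd}\Im q\,|u|^2 = I = 0$ and $\Im q>0$ on the open set $\Omega$, necessarily $u=0$ a.e. on $\Omega$. Because $u$ solves $(\Delta + q)u=0$ with $q\in L^\infty$ in the connected set $\Rd$ and vanishes on the open subset $\Omega$, the weak unique continuation principle forces $u\equiv 0$. (Alternatively, $m(R)\to 0$ together with Rellich's lemma gives $u\equiv 0$ outside the support of $V$, after which unique continuation propagates the vanishing inward.) I expect the main obstacle to be precisely the remainder estimates in the third step: the bookkeeping of surface areas against the radiation-decay exponents must be carried out carefully, and it is exactly the two-speed radiation condition with its $\alpha$-dependent thresholds that renders both error terms negligible.
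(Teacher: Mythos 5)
Your proposal is correct and follows essentially the same route as the paper: Fredholm alternative via the compactness of $G_\theta V$, reduction of the homogeneous equation to the Robin problem with zero data, the energy identity on $\bken$ with the flat boundary term killed by $\theta\in\R$, the split of $\sken$ into $\sbat$ and $\sbi$ combined with the two-speed radiation condition to force $\int_{\Rd}\Im q\,|u|^2=0$, and unique continuation from $\Omega$. Your explicit surface-measure bookkeeping for the remainder terms is just a spelled-out version of the paper's integrated radiation condition (\ref{radisphere}), so no substantive difference.
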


\begin{proof}
By Proposition \ref{pro1}, the Fredholm alternative guarantees that (\ref{+}) has a unique solution provided that the corresponding homogeneous equation has only the zero solution. In what follows, $B$ stands for a compact set in $\Rd$ that contains the support of $V$.

Observe that a solution of the homogeneous equation
\begin{equation}
u(x) = \int_{y_{d}> 0} G_{\theta}(x,y) V(y) u(y)\, dy
\end{equation}
is also a solution of the equation
\begin{equation}\label{2.4}
(\Delta + q(x)) u(x) = 0 \quad \text{in} \quad \Rd, 
\end{equation}  
with boundary condition
\begin{equation}\label{robin}
\frac{\partial u}{ \partial x_{d}} + \theta u = 0 \quad \text{over} \quad \{x_{d}=0\}.
\end{equation}

To show that such a $u$ must be identically zero, we use an energy identity. We first introduce some notation and see some properties related to the radiation condition. Let $\sken$ denote the surface of the half-sphere of radius $R$ contained in the half-space $\Rd$. Let $\sbat$ be the part of $\sken$ contained in the domain $\Rgehi$. Let $\sbi$ be the complementary part, i.e. the part of $\sken$ contained in the domain $\Rken$. Note that the radiation conditions (\ref{RC3}) and (\ref{RC2}) imply that
\begin{equation}\label{radisphere}
\left\{ \begin{array}{ll}
\lim_{R \to +\infty} \int_{\sbat} \left\vert \frac{\partial u}{\partial r} - iku \right\vert^{2}\, dS = 0 \\
\\
\lim_{R \to +\infty} \int_{\sbi} \left\vert \frac{\partial u}{\partial r} - i\sqrt{\theta^2 + k^2}u \right\vert^{2}\, dS = 0. \\
\end{array} \right.
\end{equation}

The procedure for obtaining the energy identity is to multiply (\ref{2.4}) by the complex conjugate $\bar{u}$ and integrate over the domain within $\sken$ and the plane $x_d =0$. We call this domain $B_R^{+}$ and we choose $R$ such that $B \subset \bken$. After an application of the divergence theorem, we get
\begin{equation}\label{a2}
\int_{B_R^{+}} \left( |\nabla u|^{2} - q(x) |u|^{2} \right) \, dx = \int_{\sken} \frac{\partial u }{\partial r} \bar{u}\, dS - \int_{\{x_d =0\}\cap \bken}\frac{\partial u }{\partial x_d} \bar{u}\, d\sigma(x).
\end{equation}
We now take the imaginary part of the above equality. By the Robin boundary condition (\ref{robin}), since $\theta \in \R$, it follows that the integral over $\{x_d = 0\}$ does not contribute and we obtain
\begin{align}
-\int_{\bken} \Im q(x) \vert u \vert^2 \, dx &= \Im \int_{\sken} \frac{\partial u}{\partial r}\bar{u}\, dS \label{sr}\\
& = \Im \int_{\sbat} \left[\frac{\partial u}{\partial r} - iku \right]\bar{u}\, dS + k\int_{\sbat} \vert u \vert^2 \, dS\notag\\
& + \Im \int_{\sbi} \left[\frac{\partial u}{\partial r} - i \sqrt{k^2 + \theta^2}u \right]\bar{u}\, dS + \sqrt{\theta^2 + k^2}\int_{\sbi} \vert u \vert^2 \, dS.\notag
\end{align}

Observe that on the one hand, writting $q(x) = V(x) + k^2$, since $k\in \R$ and $supp \, V \subset B$, the left-hand side of the above identity can be given by
\begin{equation}\label{v}
-\Im\int_{B} V(x)|u|^2\, dx,
\end{equation}
which is finite as $V$ is a bounded potential and $u\in L^2(B)$. In addition, by the assumption that $\Im q(x) >0$, we get that (\ref{v}) is negative. On the other hand, letting $R\to \infty$, we obtain
\begin{equation}\label{lim}
\lim_{R \to \infty} -\int_{\bken} \Im q(x)|u|^2 \, dx = -\int_{\Rd} \Im q(x) |u|^2\, dx.
\end{equation}

To deal with the right-hand side of (\ref{sr}), we first note that its boundedness implies that in particular $\int_{S_R^i} |u|^2 \, dS < \infty$, $i=1,2$. Thus by the radiation condition (\ref{radisphere}) we get
\begin{equation}\label{li11}
\lim_{R \to \infty} \left[\Im \int_{\sbat} \left(\frac{\partial u}{\partial r} - iku \right)\bar{u}\, dS +\Im \int_{\sbi} \left(\frac{\partial u}{\partial r} - i \sqrt{k^2 + \theta^2}u \right)\bar{u} \right]\, dS = 0.
\end{equation}
Hence, according to the above remark related to the fact that (\ref{v}) is negative and finite, by using the positivity of $k$, $\sqrt{\theta^2 + k^2}$, it follows that
$$
0 < \lim_{R \to +\infty} \left[k \int_{\sbat} \vert u \vert^2\, dS + \sqrt{\theta^2 + k^2}\int_{\sbi} \vert u \vert^2 \, dS \right] \leq 0.
$$
This together with (\ref{li11}) implies that the right-hand side of (\ref{sr}) tends to zero as $R \to \infty$. As a consequence, combining this with (\ref{lim}) gives
\begin{equation}
\int_{\Rd} \Im q(x) |u|^2 \, dx= 0.
\end{equation}

Finally, since $\Im q(x) > 0$ in $\Omega \subset \Rd$, it may be concluded that $u$ must be zero there, and the unique continuation gives $u=0$ in $\Rd$, which is our claim.

\end{proof}

\subsection{Rigid and absorbing boundary ($\theta = 0$ and $\Im \theta >0$)}\label{uni2}

Let us consider the problem (\ref{12}) in half-space $\Rd$, with the radiation conditions (\ref{RC3}) and (\ref{RC2}) replaced by the usual Sommerfeld's one,
\begin{equation}\label{S}
\frac{\partial u}{\partial r} -i ku = o(r^{-\frac{(d-1)}{2}}), 
\end{equation}
uniformly in $\alpha$ as $r=|x| \to \infty$ with $0 < \alpha < \pi$, where here $(r, \alpha)$ denotes the plane polar coordinates of $x\in \Rd$. 


The same approach as above applies to these cases. By using (\ref{grbcw0}) and (\ref{grbcw1}), the proof of Propositions \ref{pro1} and \ref{++} runs as before, the only difference being in the uniqueness result when $\Im \theta >0$. 

In order to prove that the integral equation (\ref{+}) for $\Im \theta >0$ has a unique solution, one needs to work a bit more. After multiplying equation $\Delta u + q(x)u = 0$ by $\bar{u}$ and integrating over the domain $B_{R}^+$, since in this case $\theta \notin \R$, when we take the imaginary part of the identity (\ref{a2}), we obtain
\begin{align}\label{tita2}
-\Im \int_{B_{R}^+}& q(x)|u|^2 \, dx = \Im \int_{\sken} \frac{\partial u}{\partial r}\bar{u} \, dS + \Im\theta \int_{\{x_d =0\}\cap \bken} |u|^2 \, d\sigma(x)\\
& = \Im \left[ \int_{\sken} \left(\frac{\partial u}{\partial r} -iku \right)\bar{u}\, dS + ik\int_{\sken} |u|^2\, dS + \theta \int_{\{x_d =0\}\cap \bken} |u|^2 \, d\sigma(x)\right].\notag
\end{align}

We deal with the left hand-side of the above identity as in the $\theta >0$ case. From the fact that $u \in L^2_{loc}(\R^2_{+})$ and $\Im q > 0$ in $B \subset B_R^+$, we have that the right hand-side of (\ref{tita2}) is also bounded. In particular, $\int_{\sken} |u|^2 \, dS < \infty$. Thus by the radiation condition (\ref{S}), it follows that
\begin{equation}
\lim_{R \to \infty} \Im \int_{\sken} \left(\frac{\partial u}{\partial r} -iku \right)\bar{u}\, dS = 0.
\end{equation}
In addition, since for $R$ large enough, the left-hand side of (\ref{tita2}) is negative and the right-hand side is positive, we have
\begin{equation}
 \lim_{R \to \infty} \left[ k\int_{\sken} |u|^2 \, dS + \Im\theta \int_{\{x_d =0\}\cap\bken} |u|^2 \, d\sigma(x) \right] = 0.
\end{equation}

The rest of the proof runs as before.

\begin{remark}
Note that this argument breaks down in the case that $\Im k < 0$ or/and $\Im \theta < 0$. This is the reason that we are not able to show the case when $\Re\theta < 0$ which is studied in \cite{DHN}. Our argument would apply for $k\in \mathbb{C}$ with $\Im k > 0$ and $\theta \in \mathbb{C}$ with $\Im \theta >0$. The rest of the cases need a different reasoning.
\end{remark}

\subsection{Existence and uniqueness}\label{exun}

We can now formulate our main result of this section.

\begin{thm}
Let $d=2,3$. We assume that $\theta\geq 0$ or $\Im \theta >0$, $\Im q(x)\geq 0$ in $\Rd$ and $\Im q(x) >0$ in a bounded domain $\Omega \subset \Rd$. Let $f$ be a function in $H^{-1/2, -\delta}(\R^{d-1})$, $\delta >\frac{1}{2}$. Then the problem (\ref{1}) has a unique solution $u \in H^{1, -\delta}(\Rd)$ .
\end{thm}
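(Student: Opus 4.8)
The plan is to obtain the theorem as the synthesis of Propositions \ref{pro1} and \ref{++}: existence will come from solving the integral equation (\ref{+}), while uniqueness will follow from the energy identity already exploited in the proof of Proposition \ref{++}. Throughout I would treat the non-absorbing case $\theta>0$ (radiation conditions (\ref{RC3})--(\ref{RC2})) and the rigid/absorbing case $\theta=0$ or $\Im\theta>0$ (Sommerfeld condition (\ref{S})) in parallel, since in both the relevant Green's function $G_\theta$ of (\ref{G1}) and the mapping properties of $G_\theta V$ have already been established.

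For existence, I would first note that since $f\in H^{-1/2,-\delta}(\R^{d-1})$ and $V=q-k^2$ is bounded with compact support, the right-hand side of (\ref{+}) defines an element of $H^{1,-\delta}(\Rd)$, so writing (\ref{+}) as $(I-G_\theta V)u=F$ and invoking the compactness from Proposition \ref{pro1} together with the injectivity from Proposition \ref{++}, the Fredholm alternative yields a unique solution $u\in H^{1,-\delta}(\Rd)$. It then remains to check that this $u$ solves (\ref{1}). Applying $\Delta+k^2$ to (\ref{+}) and using $(\Delta_x+k^2)G_\theta(x,y)=-\delta(x-y)$ from (\ref{G1}): for $x_d>0$ the surface term vanishes identically (there $x\neq y$ since $y_d=0$), while the volume term reproduces $-(q(x)-k^2)u(x)$, giving $(\Delta+q)u=0$ in $\Rd$. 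The Robin condition on $\{x_d=0\}$ is exactly the jump computation (\ref{poisson0}) (or (\ref{poisson})) already carried out: the volume term contributes nothing on the boundary because $G_\theta$ satisfies the homogeneous Robin condition for $y_d>0$, while the single-layer term produces $f$. Since $G_\theta$ satisfies (\ref{RC3})--(\ref{RC2}) (resp. (\ref{S})), so does $u$; and $u\in H^{1,-\delta}(\Rd)$ together with the trace theorem for $\gamma_0,\gamma_1$ guarantees $\gamma_0 u\in H^{1/2,-\delta}(\R^{d-1})$ and $\gamma_1 u\in H^{-1/2,-\delta}(\R^{d-1})$, so the boundary condition is meaningful.

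For uniqueness, suppose $u_1,u_2\in H^{1,-\delta}(\Rd)$ both solve (\ref{1}) with the same data $f$. Their difference $w=u_1-u_2$ solves the homogeneous problem $(\Delta+q)w=0$ in $\Rd$ with $\partial w/\partial x_d+\theta w=0$ on $\{x_d=0\}$ and the same radiation condition. This is precisely the situation treated at the end of the proof of Proposition \ref{++}: multiplying by $\bar w$, integrating over $\bken$, applying the divergence theorem to reach (\ref{a2}), and taking imaginary parts yields (\ref{sr}) (resp. (\ref{tita2}) when $\Im\theta>0$). The flat-boundary term drops out by the homogeneous Robin condition, the spherical cross terms vanish in the limit by (\ref{radisphere}) (resp. by (\ref{S})), and the sign analysis forces $\int_{\Rd}\Im q\,|w|^2\,dx=0$. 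Since $\Im q>0$ on $\Omega$, this gives $w\equiv 0$ on $\Omega$, and unique continuation propagates $w\equiv 0$ to all of $\Rd$.

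The step I expect to require the most care is the passage between the integral-equation formulation (\ref{+}) and the differential formulation (\ref{1}): in particular, justifying that the single-layer term maps $H^{-1/2,-\delta}(\R^{d-1})$ into $H^{1,-\delta}(\Rd)$, that the jump relation (\ref{poisson0}) holds in the weighted spaces, and that the energy identity is applied to a genuine $H^{1,-\delta}$ solution whose spherical integrals $\int_{S_R^i}|w|^2\,dS$ are controlled (which is where the radiation conditions and the finiteness of (\ref{v}) are used). Once these are in place, the theorem follows by combining existence via Propositions \ref{pro1}--\ref{++} with the energy-identity uniqueness above, uniformly over the admissible range $\theta\geq 0$ or $\Im\theta>0$.
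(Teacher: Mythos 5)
Your proposal is correct and follows essentially the same route as the paper: existence by unique solvability of the integral equation (\ref{+}) via Propositions \ref{pro1} and \ref{++} together with the construction of $G_\theta$, and uniqueness by the energy identity from the proof of Proposition \ref{++} applied to the difference of two solutions. The paper's own proof is in fact terser than yours; the details you flag as delicate (the jump relation (\ref{poisson0}), the mapping properties of the single-layer term, and the control of the spherical integrals) are exactly the points the paper handles in Section \ref{uni3} and relies on here without repetition.
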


\begin{proof}

\begin{itemize}
\item[]

\item[] \emph{Existence:} We will show that the solution $u$ of the integral equation (\ref{+}) satisfies problem (\ref{1}). 

By construction, since $G_{\theta}$ is solution of the problem (\ref{G1}), u satisfies equation (\ref{2.4}), the boundary condition
$$
\frac{\partial u}{\partial x_{d}} + \theta u = f \quad \text{over} \quad \{ x_{d} = 0\}
$$
and the radiation conditions (\ref{RC3}), (\ref{RC2}) or (\ref{S}). 

\item[] \emph{Uniqueness:} The uniqueness is guaranteed by showing that $u\equiv 0$ if $f=0$. This is done in the proof of Proposition \ref{++}.
\end{itemize}
\end{proof}

\section{Proof of the main result}\label{prooftheorem}

This section deals with the proof of Theorem \ref{main}. 

It will be divided into two steps. We first prove that knowledge of the Robin-to-Robin map implies knowledge of the Dirichlet-to-Neumann map. This argument works for both two and three dimensions, and any $\theta \in \mathbb{C}$. We next concern with determining the potential $q(x)$ in the half-space from the Dirichlet-to-Neumann map. In the three dimensional case, this is done by Cheney, Lassas and Uhlmann \cite{clu}. Thus we leave it to the reader. We will focus on $d=2$, giving a detailed proof for this case.

\subsection{From Robin-to-Robin to Dirichlet-to-Neumann}\label{fromrrtodn}

Let $R_{\theta_1, \theta_2}^{q_i}$, $\Lambda_{q_i}$, $N_{q_i}$ denote the Robin-to-Robin map, the Dirichlet-to-Neumann map and the Neumann-to-Dirichlet map associated with $\Delta + q_i$ on $\Rd$, defined by (\ref{dnmap}), (\ref{ndmap}) and (\ref{rrmap}), respectively.

Our main goal here is to show the following result. Unless otherwise stated, we work under the assumptions of Theorem \ref{main}.

\begin{pro}\label{rn3}
Let $f\in H^{-1/2, -\delta}(\R^{d-1})$. If $R_{\theta_1, \theta_2}^{q_1}f = R_{\theta_1, \theta_2}^{q_2}f$, then $\Lambda_{q_1}(u_1|_{x_d=0}) = \Lambda_{q_2}(u_2|_{x_d=0})$, where $u_i\in H^{-1,-\delta}(\Rd)$, $i=1,2$ is the unique solution of the problem
\begin{equation}\label{rob1}
\left\{ \begin{array}{ll}
(\Delta + q_i(x))u_i= 0 & \text{in $\Rd$}\\
\frac{\partial u_i}{\partial x_{d}} + \theta_1 u_i = f & \text{over $\{x_{d} = 0\}$}\\
u_i \quad \text{satisfies the corresponding radiation condition}.
\end{array} \right.
\end{equation}
As a consequence, $\Lambda_{q_1}g = \Lambda_{q_2}g$ for any $g\in H^{1/2,-\delta}(\R^{d-1})$.
\end{pro}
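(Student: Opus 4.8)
The plan is to split the argument along the two assertions of the statement, and to note that the entire first part is purely algebraic on the boundary once both Robin solutions are in hand; only the ``as a consequence'' clause needs genuine input from the direct theory.

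First I would fix $f\in H^{-1/2,-\delta}(\R^{d-1})$ and take $u_1,u_2\in H^{1,-\delta}(\Rd)$ to be the unique solutions of (\ref{rob1}) for $q_1$ and $q_2$, whose existence and uniqueness are the content of the direct-problem theorem of Section~\ref{exun}. Reading off traces on $\{x_d=0\}$, each $u_i$ satisfies the $\theta_1$-Robin condition, so subtracting the two gives
\begin{equation}\notag
\left(\frac{\partial u_1}{\partial x_d}-\frac{\partial u_2}{\partial x_d}\right)\Big|_{x_d=0}+\theta_1\,(u_1-u_2)\big|_{x_d=0}=0 .
\end{equation}
On the other hand, the hypothesis $R_{\theta_1,\theta_2}^{q_1}f=R_{\theta_1,\theta_2}^{q_2}f$ is, by the definition (\ref{rrmap}), exactly the statement that the $\theta_2$-Robin traces agree:
\begin{equation}\notag
\left(\frac{\partial u_1}{\partial x_d}-\frac{\partial u_2}{\partial x_d}\right)\Big|_{x_d=0}+\theta_2\,(u_1-u_2)\big|_{x_d=0}=0 .
\end{equation}
Subtracting these two identities cancels the normal-derivative term and leaves $(\theta_1-\theta_2)(u_1-u_2)\big|_{x_d=0}=0$; since $\theta_1\neq\theta_2$ this forces $u_1\big|_{x_d=0}=u_2\big|_{x_d=0}$, and then either identity gives $\frac{\partial u_1}{\partial x_d}\big|_{x_d=0}=\frac{\partial u_2}{\partial x_d}\big|_{x_d=0}$. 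By the definition (\ref{dnmap}) of the Dirichlet-to-Neumann map this is precisely $\Lambda_{q_1}(u_1|_{x_d=0})=\Lambda_{q_2}(u_2|_{x_d=0})$.

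For the consequence I would realize an arbitrary Dirichlet datum as such a common trace. Given $g\in H^{1/2,-\delta}(\R^{d-1})$, the operator $\Lambda_{q_1}$ is defined on all of $H^{1/2,-\delta}(\R^{d-1})$, so there is an admissible solution $u_1$ of $(\Delta+q_1)u_1=0$ with $u_1\big|_{x_d=0}=g$. I then set $f:=\Lambda_{q_1}g+\theta_1 g\in H^{-1/2,-\delta}(\R^{d-1})$, which is exactly the $\theta_1$-Robin trace of $u_1$. By uniqueness of the Robin problem, this $u_1$ is the solution of (\ref{rob1}) attached to $f$, so the first part applies and yields $u_2\big|_{x_d=0}=u_1\big|_{x_d=0}=g$ together with $\Lambda_{q_1}(g)=\Lambda_{q_2}(u_2|_{x_d=0})=\Lambda_{q_2}(g)$. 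As $g$ was arbitrary, this is the claim.

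The routine points are the trace estimates ensuring $f=\Lambda_{q_1}g+\theta_1 g$ indeed lands in $H^{-1/2,-\delta}(\R^{d-1})$, and the essential use of $\theta_1\neq\theta_2$ in the cancellation. The one step deserving care, and which I expect to be the main obstacle, is the identification in the second part of the Dirichlet solution realizing $g$ with the Robin solution attached to $f$: this requires that the Dirichlet and Robin problems be well posed in the \emph{same} solution class --- solutions in $H^{1,-\delta}(\Rd)$ satisfying the relevant radiation condition --- so that the uniqueness result of Section~\ref{exun} may legitimately equate the two. Everything else is boundary bookkeeping.
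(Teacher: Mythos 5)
Your proposal is correct, and while it rests on the same algebraic cancellation as the paper, it packages it quite differently. The paper works at the operator level: it records the identities $R_{\theta_1,\theta_2}^{q_i}(\Lambda_{q_i}+\theta_1)=\Lambda_{q_i}+\theta_2$, subtracts them to get $R_{\theta_1,\theta_2}(\Lambda_{q_1}-\Lambda_{q_2})=\Lambda_{q_1}-\Lambda_{q_2}$, and then invokes a separate lemma (Lemma \ref{lema2}) asserting that $T=R_{\theta_1,\theta_2}-I$ is injective --- a fact whose proof is exactly the computation $S(\Lambda_q+\theta)u_f=(\theta'-\theta)u_f$, i.e.\ the same $\theta_1\neq\theta_2$ cancellation you perform directly on the two boundary identities. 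Your pointwise version is shorter, needs no operator-theoretic apparatus, and --- importantly --- produces the extra conclusion $u_1|_{x_d=0}=u_2|_{x_d=0}$ explicitly. That is genuinely useful: the paper's own treatment of the ``as a consequence'' clause writes $u_{i_f}=g$ for both $i=1,2$ and combines this with $\Lambda_{q_1}u_{1_f}=\Lambda_{q_2}u_{2_f}$, which tacitly requires the two Dirichlet traces to coincide; your argument supplies precisely that, and your construction $f=\Lambda_{q_1}g+\theta_1 g$ (legitimate since $H^{1/2,-\delta}\subset H^{-1/2,-\delta}$ by (\ref{inclusion})) makes the passage from one datum to all $g$ transparent. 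What the paper's route buys in exchange is the reusable operator information in Lemma \ref{lema2} (injectivity, dense range, and invertibility of $\Lambda_q+\theta$), which is of independent interest but not needed for this proposition. One shared caveat, which you handle correctly but is worth stating: the ``for any $g$'' conclusion requires the hypothesis $R_{\theta_1,\theta_2}^{q_1}f=R_{\theta_1,\theta_2}^{q_2}f$ for every $f$ of the form $\Lambda_{q_1}g+\theta_1 g$, i.e.\ effectively the full equality of the Robin-to-Robin maps assumed in Theorem \ref{main}, and both your argument and the paper's rely on the Dirichlet problem being the $\theta=\infty$ instance of the Robin problem, well posed in the same class $H^{1,-\delta}(\Rd)$ with the same radiation condition, so that uniqueness identifies the two solutions.
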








To this end, we first give the following result that relates the mappings $R_{\theta_1, \theta_2}^q$ and $\Lambda_q$ and is fundamental for our approach.

\begin{lem}\label{lema2}
Let $\theta, \theta' \in \mathbb{C}$ with $\theta \neq \theta'$ and $S = R_{\theta, \theta'}^{q} - I:H^{-1/2,-\delta}(\R^{d-1})\to H^{-1/2,-\delta}(\R^{d-1})$. Then
\begin{equation}\label{sq}
S(\Lambda_{q} + \theta)= (\theta - \theta')I .
\end{equation}
Moreover,  
\begin{itemize}
\item[(i)] $\Lambda_{q} + \theta$ is injective.
\item[(ii)]$\Lambda_{q} + \theta$ has a dense range.
\item[(iii)] $(\Lambda_{q} +\theta)S=(\theta - \theta')I$.
\item[(iv)] $S$ is one to one with $S^{-1}= (\theta - \theta')(\Lambda_q + \theta)$. Hence, $\Lambda_q + \theta$ is invertible.
\end{itemize}
\end{lem}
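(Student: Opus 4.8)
The plan is to read off everything from the single relation between $R_{\theta,\theta'}^q$ and $\Lambda_q$ forced by their definitions, and then to convert the resulting factorizations into invertibility statements. Throughout I would write $T=\Lambda_q+\theta$ and $T'=\Lambda_q+\theta'$, regarded as maps $H^{1/2,-\delta}(\R^{d-1})\to H^{-1/2,-\delta}(\R^{d-1})$. For a solution $u$ with Dirichlet trace $g=u|_{x_d=0}$, its Robin data for the parameters $\theta$ and $\theta'$ are exactly $Tg$ and $T'g$; hence the defining property (\ref{rrmap}) of $R_{\theta,\theta'}^q$ says precisely that $R_{\theta,\theta'}^q\,T=T'$. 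Subtracting $T$ gives $S\,T=(R_{\theta,\theta'}^q-I)T=T'-T$, a nonzero scalar multiple of the identity, which is (\ref{sq}). I would present this computation first, since it is the engine for the whole lemma.

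Part~(i) is then immediate: if $(\Lambda_q+\theta)g=0$, applying $S$ and using (\ref{sq}) forces a nonzero scalar multiple of $g$ to vanish, so $g=0$. For the surjectivity underlying part~(ii) I would invoke the direct problem. Given any $h\in H^{-1/2,-\delta}(\R^{d-1})$, the well-posedness result of Section~\ref{exun} produces a solution $u$ of the Robin problem with parameter $\theta$ and data $h$; its Dirichlet trace $g$ satisfies $(\Lambda_q+\theta)g=h$. Thus $\Lambda_q+\theta$ is onto, in particular it has dense range, which is part~(ii); together with (i) it is a bijection. This same solvability is what makes $R_{\theta,\theta'}^q$, and hence $S$, well defined on all of $H^{-1/2,-\delta}(\R^{d-1})$.

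For part~(iii) I would exploit that $T$ and $T'$ are both $\Lambda_q$ plus a scalar and therefore commute. Since $T$ is invertible, $R_{\theta,\theta'}^q T=T'$ gives $R_{\theta,\theta'}^q=T'T^{-1}$, so $T R_{\theta,\theta'}^q=TT'T^{-1}=T'TT^{-1}=T'$, and consequently $(\Lambda_q+\theta)S=TR_{\theta,\theta'}^q-T=T'-T$, again a scalar multiple of the identity, namely part~(iii). (Equivalently and more concretely: for $h=Tg$ the identity (\ref{sq}) shows that $Sh$ is a scalar multiple of $g$, hence lies in the smaller space $H^{1/2,-\delta}(\R^{d-1})$, and applying $\Lambda_q+\theta$ returns the same scalar multiple of $h$; since $T$ is onto this pins down $(\Lambda_q+\theta)S$ on all of $H^{-1/2,-\delta}(\R^{d-1})$.) Finally, for part~(iv), the two factorizations (\ref{sq}) and (iii) exhibit a fixed scalar multiple of $\Lambda_q+\theta$ as a two-sided inverse of $S$; hence $S$ is bijective and, consequently, $\Lambda_q+\theta$ is invertible.

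The main obstacle I anticipate is not the algebra but the bookkeeping of the function spaces: $\Lambda_q+\theta$ and $S$ act between different weighted Sobolev spaces, so the composition $(\Lambda_q+\theta)S$ in part~(iii) is only legitimate because $S$ in fact sends $H^{-1/2,-\delta}(\R^{d-1})$ back into the smaller space $H^{1/2,-\delta}(\R^{d-1})$. Establishing this, together with the surjectivity used for (ii) and (iii), is exactly where the unique solvability of the direct Robin problem is indispensable; without it one would be reduced to a more delicate duality argument for the dense range, involving the adjoint of the Dirichlet-to-Neumann map for the conjugate potential, whose uniqueness theory is far less transparent in the non-absorbing regime.
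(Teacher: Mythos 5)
Your proposal is correct, and for (\ref{sq}), (i), (iii) and (iv) it runs on the same engine as the paper: the Robin data of a single solution for the parameters $\theta$ and $\theta'$ are $(\Lambda_q+\theta)g$ and $(\Lambda_q+\theta')g$, so $R^q_{\theta,\theta'}(\Lambda_q+\theta)=\Lambda_q+\theta'$ and everything follows by subtracting the identity. The genuine difference is in part (ii). The paper proves dense range by duality: if $\langle(\Lambda_q+\theta)u,v\rangle=0$ for all $u$, then $(\Lambda_q+\bar\theta)v=0$ and $v=0$ by (i). You instead obtain surjectivity (hence a fortiori dense range) directly from the unique solvability of the direct Robin problem: every $h\in H^{-1/2,-\delta}(\R^{d-1})$ is the Robin data of a solution whose Dirichlet trace $g$ satisfies $(\Lambda_q+\theta)g=h$. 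Your route is arguably more robust: the paper's duality step silently identifies the adjoint of $\Lambda_q$ with $\Lambda_q$, which for the complex potentials considered here ($\Im q>0$ on $\Omega$) really involves the conjugate potential and a separate injectivity argument --- exactly the pitfall you flag at the end. It also renders superfluous the extension step in the paper's proof of (iii) (``by (ii) we extend the operator to $H^{-1/2,-\delta}$'', which as written would need a continuity argument), since with surjectivity the identity $(\Lambda_q+\theta)S=(\theta'-\theta)I$ is established on all of $H^{-1/2,-\delta}(\R^{d-1})$ at once; your parenthetical remark that $S$ maps into $H^{1/2,-\delta}(\R^{d-1})$ is a useful point the paper glosses over. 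Two minor caveats: the shortcut $TT'=T'T$ does not literally typecheck, since $T$ and $T'$ map $H^{1/2,-\delta}(\R^{d-1})$ into $H^{-1/2,-\delta}(\R^{d-1})$, but the concrete version you give in parentheses is the rigorous one and coincides with the paper's computation; and both your computation and the paper's produce the factor $(\theta'-\theta)$ rather than the $(\theta-\theta')$ appearing in the statement (likewise $S^{-1}=(\theta-\theta')(\Lambda_q+\theta)$ should read $(\theta-\theta')^{-1}(\Lambda_q+\theta)$) --- these are sign and typo inconsistencies internal to the paper, not defects of your argument.
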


\begin{proof}
Let $f\in H^{-1/2,-\delta}(\R^{d-1})$. We denote $u_f = u|_{x_d =0}$ and $\frac{\partial u_f}{\partial x_d}= \frac{\partial u}{\partial x_d}\Big|_{x_d=0}$, where $u\in H^{-1,-\delta}(\Rd)$ is the unique solution of the Robin problem
$$
(\Delta + q)u = 0 \quad \text{in} \quad \Rd \, , \quad \quad \frac{\partial u}{\partial x_d} + \theta u = f \quad \text{over} \quad \{x_d = 0\}.
$$

An easy computation shows that 
\begin{align}
S(\Lambda_{q} + \theta) u_f & = R_{\theta, \theta'}^{q} (\Lambda_{q}u_f + \theta u_f) - \left(\frac{\partial u_f}{\partial x_d} + \theta u_f \right)\notag\\
& = \left(\frac{\partial u_f}{\partial x_d} + \theta' u_f \right) - \left(\frac{\partial u_f}{\partial x_d} + \theta u_f\right)\notag\\
& = (\theta' - \theta)u_f,\notag
\end{align}
which gives (\ref{sq}).

To deal with the injectivity of the operator $\Lambda_{q} + \theta$, let $g\in H^{1/2,-\delta}(\R^{d-1})$ and consider the Dirichlet problem
\begin{equation}\label{dir01}
\left\{ \begin{array}{ll}
(\Delta + q(x))u= 0 & \text{in $\Rd$}\\
u_{f} = g \\
u \quad \text{satisfies the corresponding radiation condition},
\end{array} \right.
\end{equation}
so that $\Lambda_q g = \Lambda_q u_f$. Now observe that if $(\Lambda_{q} +\theta)u_f = 0,$ then $S(\Lambda_{q} + \theta)u_f = 0$ and by (\ref{sq}) we have $(\theta' - \theta) u_f = 0$. Hence, since $\theta \neq \theta'$, it follows that $u_f=0$, which is our claim.

We next prove that if $v\in H^{1/2, -\delta}(\R^{d-1})$ is such that $\langle (\Lambda_{q} +\theta)u, v \rangle=0$ $\forall u \in H^{1/2, -\delta}(\R^{d-1})$, then $v=0$. Observe that for any $u\in H^{1/2, -\delta}(\R^{d-1})$, yields
\begin{align}\notag
\langle (\Lambda_{q} +\theta)u, v \rangle=0 & \Leftrightarrow \langle u, (\Lambda_{q} +\overline{\theta})v \rangle=0.
\end{align}
Thus $(\Lambda_{q} +\overline{\theta}) v = 0$ and by (i) we obtain $v=0$ and (ii) follows.

In order to get (iii), using $f=\frac{\partial u_f}{\partial x_d} + \theta u_f$, we check that
\begin{align}
(\Lambda_{q} + \theta) Sf & = (\Lambda_q + \theta)(R_{\theta,\theta'}-I)\left(\frac{\partial u_f}{\partial x_d}+\theta u_f \right)\notag\\
& = (\Lambda_q + \theta)(\theta'-\theta)u_f\notag\\
& = (\theta' -\theta)f\notag
\end{align}
Then by property (ii) we extend the operator to $H^{-1/2,-\delta}(\R^{d-1})$.

Finally, by (\ref{sq}) and (iii) it follows that S is surjective and injective respectively, thus $S$ is invertible with $S^{-1} = (\Lambda_q + \theta)$ and the proof is completed.
\end{proof}

Now we are ready to prove the main result of this section.

\begin{proof}{\bf{Proof of Proposition \ref{rn3}}}

Our proof starts with the observation that
\begin{align}
& R_{\theta_{1}, \theta_{2}}^{q_{1}} \left( \Lambda_{q_{1}} + \theta_{1} \right) = \Lambda_{q_{1}} + \theta_{2},\label{r1} \\
& R_{\theta_{1}, \theta_{2}}^{q_{2}} \left( \Lambda_{q_{2}} + \theta_{1} \right) = \Lambda_{q_{2}} + \theta_{2}\label{r2}.
\end{align}

By the assumption $R_{\theta_{1}, \theta_{2}}^{q_{1}} = R_{\theta_{1}, \theta_{2}}^{q_{2}}$ and the fact that $R_{\theta_{1}, \theta_{2}}^{q_{i}}$ is a linear operator, subtracting (\ref{r1}) and (\ref{r2}) we obtain
\begin{equation}\label{r3}
R_{\theta_{1}, \theta_{2}} \left(\Lambda_{q_{1}} - \Lambda_{q_{2}}\right) =  \Lambda_{q_{1}} - \Lambda_{q_{2}}.
\end{equation}
Here and subsequently, $R_{\theta_{1}, \theta_{2}}$ stands for $R_{\theta_{1}, \theta_{2}}^{q_{i}}$, $i=1,2$.

Let us denote $T= R_{\theta_{1}, \theta_{2}} - I$ and $u_{1_{f}}=u_1|_{x_d=0}$, $u_{2_{f}}=u_{2}|_{x_d=0}$. Thus from (\ref{r3}) we have
\begin{equation}
T(\Lambda_{q_{1}}u_{1_{f}} - \Lambda_{q_{2}}u_{2_{f}}) = 0
\end{equation}
and by Lemma \ref{lema2} using that $T$ is injective, we obtain 
\begin{equation}\label{dn11}
\Lambda_{q_{1}}u_{1_{f}} - \Lambda_{q_{2}}u_{2_{f}} = 0,
\end{equation}
which proves the first part of the proposition.

Let $g\in H^{1/2,-\delta}(\R^{d-1})$. The proof is completed by showing that $\Lambda_{q_1}g = \Lambda_{q_2}g$. For that, we consider the Dirichlet problem
\begin{equation}\label{dir1}
\left\{ \begin{array}{ll}
(\Delta + q_i(x))u_i= 0 & \text{in $\Rd$}\\
u_{i_f} = g \\
u_i \quad \text{satisfies the corresponding radiation condition},
\end{array} \right.
\end{equation}
where $u_i$ is the unique solution of the Robin problem (\ref{rob1}) and $u_{i_f}= u_i|_{x_d = 0}$, $i=1,2$. Observe that this can be seen as the particular case of (\ref{rob1}) with $\theta=\infty$. Thus for each $g\in H^{1/2,-\delta}(\R^{d-1})$ it is guaranteed the existence of the unique solution of the problem (\ref{dir1}) and the associated Dirichlet-to-Neumann map $\Lambda_{q_i}$ is given by 
$$
\Lambda_{q_i} g = \Lambda_{q_i}u_{i_f}.
$$
This together with (\ref{dn11}) gives our claim and the proof is complete.

\end{proof}

\begin{remark}\label{remarkbounded}
The same reasoning applies to the sufficiently smooth bounded domain case. In the same manner, using the maps defined by (\ref{dnomega}), (\ref{ndomega}) and (\ref{rromega}), we can show that for a given $f\in H^{-1/2}(\partial\Omega)$ if $R_{\theta_1, \theta_2, \Omega}^{q_1}f = R_{\theta_1, \theta_2, \Omega}^{q_2}f$, then $\Lambda_{q_1, \Omega}g = \Lambda_{q_2, \Omega}g$, for all $g\in H^{1/2}(\partial\Omega)$ which proves Theorem \ref{domain}. 
\end{remark}

It is worth pointing out that in a bounded domain case there is an alternative approach to prove that $\Lambda_{q,\Omega} + \theta$ is one to one. In fact, it can be shown that the operator $\Lambda_{q,\Omega} + \theta$ is Fredholm with index zero. For this purpose, on the one hand it needs to be checked that $\Lambda_{q,\Omega}N_{q, \Omega} = I = N_{q, \Omega}\Lambda_{q, \Omega}$. This is done by Gesztesy and Mitrea, see Remark 3.8 \cite{gm1} or Remark 3.6 \cite{gm2}. On the other hand, the compact embedding $H^{1/2}(\partial\Omega) \hookrightarrow H^{-1/2}(\partial\Omega)$ for sufficiently smooth bounded domains $\Omega \subset \R^d$ is used.

This argument breaks down in the half-space case. In particular, the embedding $H^{1/2,-\delta}(\R^{d-1}) \longrightarrow H^{-1/2,-\delta}(\R^{d-1})$ is not compact. However, it can be proved that the Dirichlet-to-Neumann map and the Neumann-to-Dirichlet map are inverse operators in our setting.

\begin{pro}\label{key}
Let $\delta > 1/2$. For any $f\in H^{1/2, -\delta}(\R^{d-1})$, it follows that
\begin{equation}\label{dnd}
\Lambda_{q} N_{q} f = f = N_{q} \Lambda_{q} f.
\end{equation}
\end{pro}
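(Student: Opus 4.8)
The plan is to read both identities off the unique solvability of the underlying boundary value problems established in Section~\ref{exun}, exploiting that $\Lambda_q$ and $N_q$ are each defined by solving a problem and recording the complementary trace. The only genuine ingredient is uniqueness: once the Dirichlet problem (the formal $\theta=\infty$ case of (\ref{1})) and the Neumann problem (the $\theta=0$ case) each have a unique solution in $H^{1,-\delta}(\Rd)$, the two compositions collapse to the identity, because a single function serves as the solution of both problems at once.

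First I would establish $N_q\Lambda_q f = f$ for $f\in H^{1/2,-\delta}(\R^{d-1})$. Let $u\in H^{1,-\delta}(\Rd)$ be the unique solution of $(\Delta+q)u=0$ with $u|_{x_d=0}=f$ and the appropriate radiation condition, so that by definition $\Lambda_q f = \frac{\partial u}{\partial x_d}\big|_{x_d=0}=:g$. Now $N_q g$ is, by definition, the Dirichlet trace of the unique solution of the Neumann problem with datum $g$; but $u$ itself solves that Neumann problem, since it satisfies the equation, has Neumann trace $g$, and obeys the radiation condition. By uniqueness the solution is $u$, whence $N_q g = u|_{x_d=0}=f$. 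The reverse identity is symmetric: for a Neumann datum $g$, letting $v$ be the unique solution of the Neumann problem with datum $g$ gives $N_q g = v|_{x_d=0}$, and since $v$ is then the unique solution of the Dirichlet problem with datum $v|_{x_d=0}$, we read off $\Lambda_q N_q g = \frac{\partial v}{\partial x_d}\big|_{x_d=0}=g$.

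The point that requires care --- and that I regard as the main obstacle --- is not the algebra but the domains. The map $N_q$ is a priori defined only on $R(\Lambda_q)\subset H^{-1/2,-\delta}(\R^{d-1})$, so before writing $\Lambda_q N_q f$ for $f\in H^{1/2,-\delta}(\R^{d-1})$ I would first check that the Neumann problem is uniquely solvable for every datum in $H^{-1/2,-\delta}(\R^{d-1})$, which is exactly the $\theta=0$ case of the theorem in Section~\ref{exun}. Solving that problem and recording the Dirichlet trace then defines a map on all of $H^{-1/2,-\delta}(\R^{d-1})$ whose $\Lambda_q$-image recovers the datum, so that $R(\Lambda_q)=H^{-1/2,-\delta}(\R^{d-1})$; in particular $H^{1/2,-\delta}(\R^{d-1})\subset R(\Lambda_q)$ through the inclusion (\ref{inclusion}), and both identities then hold for every $f\in H^{1/2,-\delta}(\R^{d-1})$. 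I would stress that this route avoids entirely the compact embedding $H^{1/2}\hookrightarrow H^{-1/2}$ that is available for bounded domains but fails in the half-space, which is precisely the reason the statement is singled out.
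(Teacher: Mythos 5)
Your proposal is correct and follows essentially the same route as the paper: both identities are read off from the unique solvability of the Dirichlet ($\theta=\infty$) and Neumann ($\theta=0$) limiting cases of the Robin problem, with the single solution $u$ serving both problems so that the complementary traces match. Your explicit treatment of the domain issue --- verifying that the Neumann problem is solvable for every datum so that $R(\Lambda_q)$ fills out $H^{-1/2,-\delta}(\R^{d-1})$ and the composition $\Lambda_q N_q$ makes sense on all of $H^{1/2,-\delta}(\R^{d-1})$ --- is a point the paper passes over silently, and is a welcome addition rather than a deviation.
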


\begin{proof}
Let $f\in H^{1/2, -\delta}(\R^{d-1})$. We first show that
\begin{equation}\label{dnda}
N_{q} \Lambda_{q} f = f.
\end{equation}

For a given data $f\in H^{1/2, -\delta}(\R^{d-1})$, we consider the following Dirichlet problem:
\begin{equation}\label{dirichlet}
\left\{ \begin{array}{ll}
(\Delta + q) u= 0 & \text{in $\Rd$}\\
\\
u = f & \text{over $\{x_{d} = 0\}$}\\
\\
\lim_{R \to +\infty} \int_{|x|=R} \left\vert \frac{\partial u}{\partial r} - iku \right\vert^{2} \, dS = 0 .
\end{array} \right.
\end{equation}

As mentioned above, this is just the particular case of our Robin problem (\ref{1}) when $\theta = \infty$. Thus we know that there exists a unique solution $u_{f} \in H^{1, -\delta}(\R^{d-1})$ of (\ref{dirichlet}) and we can define the associated Cauchy-data set  as in (\ref{cauchy}) such that $C_{q} \subset H^{1/2, -\delta}(\R^{d-1}) \times H^{-1/2, -\delta}(\R^{d-1})$, as well as the maps $\Lambda_{q}$, $N_{q}$ as in (\ref{dnmap}), (\ref{ndmap}), respectively.

Let $f_{u}=u_{f}|_{x_d =0}$ and $g_{u} = \Lambda_{q} u_{f} = \frac{\partial u_{f}}{\partial x_{d}}|_{x_{d}=0}$. Observe that by definition, 
$$
N_{q} g_{u} = f_{u}, \quad \quad \quad \Lambda_{q} f_{u} = g_{u}.
$$
Hence, 
\begin{align}
N_{q}\Lambda_{q} f = N_{q} \Lambda_{q} f_{u} = N_{q} g_{u} = f_{u} = f,
\end{align}
which gives (\ref{dnda}).

The same argument works for showing that 
\begin{equation}
\Lambda_{q} N_{q} f = f,
\end{equation}
with the only difference that in this case we consider the associated Neumann problem
\begin{equation}\label{neumann}
\left\{ \begin{array}{ll}
(\Delta + q) u= 0 & \text{in $\Rd$}\\
\\
\frac{\partial u}{\partial x_{d}} = f & \text{over $\{x_{d} = 0\}$}\\
\\
\lim_{R \to +\infty} \int_{|x|=R} \left\vert \frac{\partial u}{\partial r} - iku \right\vert^{2} \, dS = 0 .
\end{array} \right.
\end{equation}
which is the particular case of the Robin problem when $\theta = 0$ and we take $f_{u} = \frac{\partial u_{f}}{\partial x_{d}}|_{x_{d}=0}$, $g_{u} = N_{q}f_u = u_{f}|_{x_d =0}$.

\end{proof}

\begin{remark}
Although this property of the mappings is not necessary for our proof, we find it interesting and it can be useful for some other problems.
\end{remark}

\subsection{Uniqueness for a wave propagation inverse problem in a half-plane}\label{clu2d}

The aim of this section is to determine $q(x)$ in the upper half-plane from knowledge of the Dirichlet-to-Neumann map $\Lambda_{q}$.  For this purpose, we follow the same arguments as in \cite{clu}, adapting them to the two dimensional case.

The main idea is to first prove that if $\Lambda_{q_1}=\Lambda_{q_2}$ on some open subset of the boundary $x_2 = 0$, then the following orthogonality relation for the potentials $q_1$ and $q_2$ holds
\begin{equation}\label{8}
\int_B (q_1 - q_2)v_1 v_2 \, dx = 0.
\end{equation}
Here $B$ is an open set containing the supports of $q_1-k^2$ and $q_2-k^2$, while $v_1$ and $v_2$ are solutions of the Dirichlet problem

\begin{equation}\label{di2d}
\left\{ \begin{array}{ll}
(\Delta + q_i)v_i= 0 & \text{in $\R^2_{+}$}\\
\\
v_i|_{x_2 =0} = g & \text{over $\{ x_{2} = 0\}$}\\
\\
v_i \, \, \text{satisfies the outgoing radiation condition},
\end{array} \right.
\end{equation}
for $i=1,2$ and $g\in H^{1/2,-\delta}(\R^1)$.

In order to get (\ref{8}), we will use the associated Dirichlet Green's function, which can be defined by the method of images. We use a tilde to denote the image point, so that $\tilde{y}$ is the point obtained reflecting $y$ across the $y_2 = 0$ plane. First we recall that the free-plane outgoing Green's function corresponding to the medium parameter in the lower half-plane is
\begin{equation}
G(x,y) = \frac{i}{4}H_{0}^{(1)}(k|x-y|), 
\end{equation}
where $H_{0}^{(1)}$ denotes the zeroth order Hankel function of the first kind. Then the Green's function of the homogeneous half-plane is defined by the method of images as
\begin{equation}
G^{D}(x, y) = G(x,y) - G(x,\tilde{y}).
\end{equation} 

This function satisfies the problem
\begin{equation}
\left\{ \begin{array}{ll}
(\Delta + k^{2})G^{D}(x, y)= -\delta(x-y) & \text{in $\R^2_{+}$}\\
\\
G^{D}(x,y) = 0 & \text{over $\{x_{2} = 0\}$}\\
\\
\lim_{|x| \to \infty} |x|\left(\frac{\partial}{\partial |x|} -ik \right) G^{D}(x,y) = 0.
\end{array} \right.
\end{equation}

Thus we can define the perturbed free-plane Green's function associated to the problem (\ref{di2d}) as the unique solution of the Lippman-Schwinger equation
\begin{equation}\label{dir}
G^{D}_q(x,y) = G^{D}(x,y) + \int G^{D}(x,z)(q(z)-k²)G^{D}_q(z,y)\, dz.
\end{equation}

Next we compute the kernel of the Dirichlet-to-Neumann map $\Lambda_{q}$ using this Green's function. More concretely, it can be proved that the kernel of $\Lambda_q$ is
$$
-\frac{\partial}{\partial x_2}\frac{\partial}{\partial y_2} G^{D}_q(x,y)\Big{|}_{x_2 =0, y_2 =0}.
$$
See Proposition 2.1 of \cite{clu} for more details. Therefore, we say that $\Lambda_{q_1} = \Lambda_{q_2}$ on some open subset $\Gamma$ of the boundary $x_2 =0$ if the kernels of the operators coincide on $\Gamma \times \Gamma$, i.e.
\begin{equation}
\frac{\partial}{\partial x_2}\frac{\partial}{\partial y_2}G_{q_1}^{D}(x,y)\Big{|}_{x_2=0, y_2=0} = \frac{\partial}{\partial x_2}\frac{\partial}{y_2}G_{q_2}^{D}(x,y)\Big{|}_{x_2=0, y_2=0}
\end{equation}
for $x$ and $y$ in $\Gamma$.

\begin{remark}
We point out that one can also prove that the boundary value problem (\ref{di2d}) has a unique solution by using the Dirichlet Green's function $G^D_q$ given above, as done by Cheney and Isaacson \cite{ci} for the three-dimensional case. 
\end{remark}

Now we are ready to state the main result of this section.

\begin{thm}
Suppose the set $B$ containing the supports of $q_1 - k^2$ and $q_2 - k^2$ are strictly contained in the upper half-plane. Let $q_1, q_2 \in L^\infty(B)$. If $\Lambda_{q_1} = \Lambda_{q_2}$ on some open subset $\Gamma$ of the boundary $x_2 = 0$, then $q_1 = q_2$.
\end{thm}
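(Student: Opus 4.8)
The plan is to reduce the half-plane problem to the two-dimensional bounded-domain uniqueness theorem already available in the literature (Bukhgeim \cite{Bu}, in the $L^\infty$ form of Imanuvilov and Yamamoto \cite{iy}), exactly along the lines of \cite{clu}. The first step is to establish the orthogonality relation (\ref{8}). Given boundary data $g_1,g_2\in H^{1/2,-\delta}(\R^1)$, let $v_i$ solve (\ref{di2d}) with datum $g_i$. Applying Green's second identity on $\R^2_{+}\cap\{|x|<R\}$, using $\Delta v_i=-q_iv_i$ and the symmetry of the Dirichlet-to-Neumann map, and letting $R\to\infty$ so that the half-circle contribution drops out by the outgoing radiation condition, one obtains
\[
\int_B (q_1-q_2)v_1v_2\,dx = -\int_{\{x_2=0\}}\big[(\Lambda_{q_1}-\Lambda_{q_2})g_1\big]\,g_2\,d\sigma.
\]
Hence, if $g_1,g_2$ are supported in $\Gamma$, the hypothesis $\Lambda_{q_1}=\Lambda_{q_2}$ on $\Gamma$ forces the right-hand side to vanish, giving (\ref{8}) for such data.

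Second, I would upgrade this local information to a form strong enough to feed into the Calder\'on machinery. On the one hand, the equality of the two boundary kernels propagates off $\Gamma$: by the Lippmann--Schwinger equation (\ref{dir}), the difference $G^{D}_{q_i}(x,y)-G^{D}(x,y)$ is an integral of $G^{D}(x,z)(q_i-k^2)G^{D}_{q_i}(z,y)$ over $z\in B$, and since $B$ is strictly interior while $x,y$ lie on $\{x_2=0\}$, the arguments never coincide; thus the $q_i$-dependent part of the kernel $-\partial_{x_2}\partial_{y_2}G^{D}_{q_i}|_{x_2=y_2=0}$ is real-analytic in the boundary variables. The free part is common to both potentials, so the difference of kernels is real-analytic and, vanishing on the open set $\Gamma\times\Gamma$, must vanish on all of $\{x_2=0\}\times\{x_2=0\}$; that is, $\Lambda_{q_1}=\Lambda_{q_2}$ on the entire boundary, and (\ref{8}) holds for every $g_1,g_2$. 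On the other hand, since $\overline{B}$ sits inside a bounded domain $\Omega'$ compactly contained in $\R^2_{+}$, a Runge-type approximation (based on unique continuation and duality) shows that the restrictions to $\Omega'$ of solutions of (\ref{di2d}) are dense, in $L^2(B)$, among all $H^1(\Omega')$ solutions of $(\Delta+q_i)w_i=0$. Consequently (\ref{8}) extends to arbitrary local solutions $w_1,w_2$ of $(\Delta+q_i)w_i=0$ in a neighborhood of $B$.

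Finally, I would insert complex geometric optics solutions into the extended orthogonality relation. Writing the equation as the Schr\"odinger equation $-\Delta w_i=q_i w_i$ with $q_i\in L^\infty$, Bukhgeim's construction, in the $L^\infty$ refinement of \cite{iy}, furnishes for each $x_0\in B$ solutions $w_i=e^{\tau\Phi}(a_i+r_i)$ with $\Phi$ holomorphic having a nondegenerate critical point at $x_0$ and with remainders $r_i\to 0$ as $\tau\to\infty$. Substituting into $\int_B(q_1-q_2)w_1w_2\,dx=0$ and applying stationary phase yields $(q_1-q_2)(x_0)=0$; as $x_0\in B$ is arbitrary and $q_1-q_2$ vanishes outside $B$, we conclude $q_1=q_2$. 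Equivalently, once (\ref{8}) is known for all local solutions, the hypotheses of the two-dimensional bounded-domain theorem are met and the conclusion follows directly from \cite{Bu,iy}.

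The main obstacle, as in \cite{clu}, is the \emph{partial-data} nature of the hypothesis: the Dirichlet-to-Neumann maps are assumed equal only on the open subset $\Gamma$. The delicate part is therefore the second step: propagating the kernel equality off $\Gamma$ using the analyticity afforded by the strict interiority of $\mathrm{supp}(q_i-k^2)$, and then freeing the test solutions from being boundary traces via Runge approximation. Once the orthogonality relation holds for all interior solutions, the recovery of $q$ is the now-classical two-dimensional Calder\'on problem and introduces no new difficulty.
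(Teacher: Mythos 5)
Your proposal follows essentially the same route as the paper: both reduce the partial-data hypothesis to the orthogonality relation $\int_B(q_1-q_2)v_1v_2\,dx=0$ for all interior solutions by combining a Green's-identity step, an analyticity/unique-continuation argument to propagate the equality off $\Gamma$ (using that $B$ is strictly interior), and a Runge-type density step, and then conclude with Bukhgeim-type complex geometric optics solutions in the $L^\infty$ form of Imanuvilov--Yamamoto \cite{iy} together with stationary phase. The paper packages these same ingredients as the five lemmas of Cheney--Lassas--Uhlmann \cite{clu}, phrased in terms of the Dirichlet Green's function $G^D_q$ rather than solutions with boundary data supported in $\Gamma$, but the substance is identical.
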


\begin{proof}
The proof will be divided into two steps. Following the same method as in \cite{clu}, it can be proved that under the hypotheses of the theorem, the following orthogonality relation holds
\begin{equation}\label{8'}
\int_{B} (q_1 - q_2)v_1 v_2 \, dx =0
\end{equation}
for $v_1, v_2 \in V$, where $V= \{ v \in H^2(B) : (\Delta + q)v = 0\}$.

We only give the main ideas of the proof. For more details we refer the reader to \cite{clu}. The proof involves a series of five lemmas. The first two show that knowledge of the Dirichlet-to-Neumann map suffices to determine the Dirichlet Green's function outside the perturbation of $q$. The third and fourth lemmas stablish a version of the Green's theorem identity that is often used for uniqueness arguments. The last lemma shows that the linear combinations of the Dirichlet Green's function can be used to approximate the Bukhgeim solutions.

In the three dimensional case, Cheney, Lassas and Uhlmann \cite{clu} use the Sylvester-Uhlmann solutions to conclude from (\ref{8'}) that $q_1=q_2$. In our case, in the two dimensional case, we make use of Imanuvilov-Yamamoto's solutions \cite{iy}. We include here a brief outline of these solutions. 

Let $i=\sqrt{-1}$, $x=(x_1,x_2)$, $x_1, x_2 \in \R$, $z=x_1 + ix_2$, $\bar{z}$ denote the complex conjugate of $z\in\mathbb{C}$. We set $\partial_z=\frac{1}{2}(\partial_{x_1}-i\partial_{x_2})$, $\partial_{\bar{z}}=\frac{1}{2}(\partial_{x_1}+i\partial_{x_2})$ and we introduce the operators 
$$
\partial_{\bar{z}}^{-1}g = -\frac{1}{\pi}\int_{B} \frac{g(\xi_1,\xi_2)}{\zeta-z}\, d\xi_1 \, d\xi_2, \quad \quad \partial_{z}^{-1}g = -\frac{1}{\pi}\int_{B}\frac{g(\xi_1,\xi_2)}{\bar{\zeta}-\bar{z}} \, d\xi_1 \, d\xi_2,
$$
where $\zeta=\xi_1 + i\xi_2$. Consider a holomorphic function $\Phi(x,y)=\left(z-(y_1 + iy_2) \right)^2$ with $y=y_1 +iy_2$ and we introduce two operators
$$
\tilde{R}_{\tau}g = \frac{1}{2}e^{\tau(\bar{\Phi}-\Phi)}\partial_{z}^{-1}\left(ge^{\tau(\Phi-\bar{\Phi})} \right), \quad \quad R_\tau g = \frac{1}{2}e^{\tau(\Phi-\bar{\Phi})}\partial_{\bar{z}}^{-1}
\left(ge^{\tau(\bar{\Phi}-\Phi)} \right).
$$
Set $U_0 = 1$, $U_1 = \tilde{R}_\tau \left(\frac{1}{2}\left(\partial_{\bar{z}}^{-1}q_1 -\beta_1 \right) \right)$, $U_j = \tilde{R}_\tau\left(\frac{1}{2}\partial_{\bar{z}}^{-1}(q_1 U_{j-1}) \right)$ for all $j\geq 2$, where $\beta_1$ is a fixed constant. We then construct a solution to the Schr\"odinger equation with $q_1$ in the form
\begin{equation}\label{Aiy}
v_1 = \sum_{j=0}^\infty e^{\tau\Phi}(-1)^j U_j.
\end{equation}
Similarly, we also construct the complex geometric optics solution for the Schr\"odinger equation with the potential $q_2$ 
\begin{equation}\label{Biy}
v_2 = \sum_{j=0}^\infty e^{-\tau\bar{\Phi}} (-1)^j V_j,
\end{equation}
where $V_0 =1$, $V_1=R_{-\tau}(\partial_z^{-1}q_2 - \beta_2)$, $V_j = R_{-\tau}\left(\partial_z^{-1}(q_2 V_{j-1}) \right)$ and $\beta_2$ is a  specific constant. 

After plugging the solutions (\ref{Aiy}) and (\ref{Biy}) into the orthogonality relation (\ref{8'}), by the stationary phase argument, it can be showed that
\begin{equation}\label{Ciy}
0 = \int_B (q_1 - q_2) v_1 v_2 \, dx = \int_B (q_1 - q_2)e^{\tau(\bar{\Phi} - \Phi)}\, dx + o\left(\frac{1}{\tau} \right) \quad \quad \text{as} \quad \tau \to +\infty.
\end{equation}
Finally, from (\ref{Ciy}) by using some measure theory and the stationary phase method, it can be concluded that $q_1 - q_2 = 0$.

\end{proof}

\section{Appendix: Unperturbed Robin Green's function}\label{appendix}

Let $d=2, 3$. In this section we present the Green's function satisfying the boundary value problem
\begin{equation}\label{ro}
\left\{ \begin{array}{ll}
(\Delta + k^{2})G_{\theta}(x,y) = -\delta(x-y) & \text{in $\R^d_{+}$}\\
\\
\frac{\partial G_{\theta}(x,y)}{\partial x_{d}} + \theta G_{\theta}(x,y) = 0 & \text{over $\{x_{d} = 0\}$},
\end{array} \right.
\end{equation}
where $y\in \Rd$ is a fixed source point and $x\in\Rd$ is the receiver point.

A useful method for constructing the Robin Green's function for a general $\theta$ is taking a Fourier transform in the horizontal directions $x_1$, $x_{d-1}$ and reducing the PDE of the problem (\ref{ro}) to an ordinary differential equation. Thus the spectral Green's function can be given by
\begin{equation}\label{fouriergreen}
\widehat{G_\theta}(\xi,k,x_d,y_d)= C_\pi\left(\frac{\theta +\sqrt{\xi^2 - k^2}}{\theta-\sqrt{\xi^2 -k^2}}\frac{e^{-\sqrt{\xi^2 -k^2}(x_d + y_d)}}{\sqrt{\xi^2 - k^2}} - \frac{e^{-\sqrt{\xi^2 -k^2}|x_d -y_d|}}{\sqrt{\xi^2 - k^2}} \right)
\end{equation}
where $C_\pi= \frac{1}{\sqrt{8\pi}}$ when $d=2$, $C_\pi = \frac{1}{4\pi}$ if $d=3$. Using the inverse Fourier transform, the spatial Green's function is represented as
\begin{equation}\label{spatialrobin}
G_{\theta}(x,y)=C'_\pi\int_{\R^{d-1}} \widehat{G_{\theta}}(\xi,k,x_d,y_d)e^{-i\xi\cdot(x'-y')} \, d\xi,
\end{equation}
for $C'_\pi = \frac{1}{4\pi}$ in the two dimensional case and $C'_\pi=\frac{1}{2\pi}$ when $d=3$. See \cite{DMN2} and \cite{DMN1} for more details.

We consider the cases when $\theta >0$, the non-absorbing boundary case, studied by Dur\'an, Muga and N\'ed\'elec \cite{DMN2} ($d=2$), \cite{DMN1}-\cite{DMN} (d=3) and also the rigid or energy-absorbing boundary case, $\theta =0$ or $\Im \theta > 0$, given by Thomasson \cite{Th}, Chandler-Wilde \cite{cw} ($d=3$) and Chandler-Wilde, Hothersall \cite{cw1}-\cite{cw2} ($d=2$). We include here a brief summary of these works pointing out the main properties of $G_{\theta}$ that we need for solving the direct problem. More precisley, we focus on the asymptotic expansion of $G_\theta$.

\subsection{Non-absorbing boundary ($\theta > 0$)}\label{sectiongreen}

The Green's function is given by (\ref{spatialrobin}). In order to get its asymptotics, some analysis techniques like the stationary phase and the calculus of residues are used. 

In the two dimensional case, it has been showed \cite{DMN2} that
$$
G_{\theta}(x,y) = G_{\theta}^1 (x,y) + G_{\theta}^2(x,y),
$$
where
\begin{equation}\label{grb20}
G_{\theta}^1(x,y) = \left\{ \begin{array}{ll}
\left(\frac{\theta - ik \sin\gamma}{\theta + ik\sin\gamma}e^{2ik\sin\gamma y_2} -1 \right)\frac{e^{i(kr +\frac{\pi}{4})}}{\sqrt{8\pi kr}} + o\left(r^{-\frac{3}{2}} \right), & \text{when $x_2 - y_2 >0$,}\\
\\
o\left(r^{-1} \right) & \text{when $x_2 - y_2 \leq 0$}
\end{array} \right.
\end{equation}
and
\begin{equation}\label{grb21}
G_{\theta}^2(x,y) = \frac{-i\theta}{\sqrt{\theta^2 + k^2}} e^{-\theta(y_2 + x_2)}e^{i\sqrt{\theta^2 + k^2}|y_1 - x_1|} + o\left(r^{-1}\right).
\end{equation}
Here $(r, \gamma)$ are the polar coordinates for the spatial variables. 
Then, the general radiation condition for $r$ large and $0 < \alpha < \frac{1}{2}$ is the following:
\begin{equation}\label{RCapp}
\left\{ \begin{array}{ll}
\left| \frac{\partial G_{\theta}}{\partial r} - ik G_{\theta}\right| < cr^{-(1-\alpha)} & \text{in $\R^2_{+}(\alpha_{+}) = \{ (x_1, x_2)\in \R^2_{+} : x_{2} > cr^{\alpha}\}$}\\
\\
\left|\frac{\partial G_{\theta}}{\partial r} - i \sqrt{k^{2} + \theta^{2}}G_{\theta} \right| < cr^{-(1-\alpha)} & \text{in $\R^2_{+}(\alpha_{-}) = \{(x_1, x_2)\in \R^2_{+} : x_{2} < cr^\alpha \}$}.
\end{array} \right.
\end{equation}

Regarding to the three dimensional case (\cite{DMN1}, \cite{DMN}), the following estimation is obtained. For $0 < \alpha <1$,
\begin{equation}\label{grb}
G_{\theta}(x,y) = \left\{ \begin{array}{ll}
\left(\frac{\theta - ik \cos\gamma}{\theta + ik\cos\gamma}e^{2ik\cos\gamma y_3} -1 \right)\frac{e^{ikr}}{4\pi r} + O\left(r^{-(2\alpha +1/2)} \right), & \text{when $r\cos\gamma > r^\alpha$,}\\
\\
 \frac{\theta e^{-\theta(x_3 + y_3)}}{i\sqrt{2\pi}(\theta^2 + k^2)^{1/4}}\frac{e^{i\left(\sqrt{\theta^2 + k^2}\rho - \pi/4 \right)}}{\sqrt{\rho}} + O\left(\rho^{-3/2} \right) & \text{when $r\cos\gamma < r^\alpha$},
\end{array} \right.
\end{equation}
as $r\to \infty$ and $\rho \to +\infty$, respectively. Here $(r,\gamma,\varphi)$ are the spherical coordinates in the spatial variables
and $\rho$ stands for the horizontal radial variable defined as the radius of the projection over the horizontal plane, that is,
$$
\rho = r\sin\gamma = \sqrt{(x_1 - y_1)^2 + (x_2 - y_2)^2} = \vert x' - y' \vert,
$$
where $x'=(x_1,x_2)$ and $y'=(y_1,y_2)$.
\noindent
\\
As a consequence, it may be concluded that the Green's function satisfies the following general radiation condition
\begin{equation}\label{RCapp}
\left\{ \begin{array}{ll}
\left| \frac{\partial G_{\theta}}{\partial r} - ik G_{\theta}\right| < \frac{C}{r^{\left(2\alpha + \frac{1}{2} \right)}} & \text{in $\R^3_{+}(\alpha_{+}) = \{x_{3} > r^{\alpha}\}$}\\
\\
\left|\frac{\partial G_{\theta}}{\partial r} - i \sqrt{k^{2} + \theta^{2}}G_{\theta} \right| < \frac{C}{r^{\left(\frac{3}{2} -\alpha \right)}} & \text{in $\R^3_{+}(\alpha_{-}) = \{ 0 \leq  x_{3} < r^\alpha \}$},
\end{array} \right.
\end{equation}
when $r \to + \infty$ and for any $\alpha \in \left(\frac{1}{4}, \frac{1}{2} \right)$.

\subsection{Rigid or energy-absorbing boundary ($\theta= 0$ or $\Im \theta > 0$)}\label{cw}

In the case of a rigid boundary, the solution of the problem (\ref{ro}) is easily found by the method of images to be
\begin{equation}\label{grbcw0}
G_{0}(x,y) = \left\{ \begin{array}{ll}
-\frac{e^{ikR}}{4\pi R} - \frac{e^{ikR'}}{4\pi R'}, & \text{when $d=3$,}\\
\\
  -\frac{i}{4}H_{0}^{(1)}(kR) - \frac{i}{4}H_{0}^{(1)}(kR') & \text{when $d=2$},
\end{array} \right.
\end{equation}
where $R=|x-y|$ and $R'=|x-y'|$, as $y'$ is the image position with $(y_1, y_2, -y_3)$ in the three-dimensional case and $(y_1,-y_2)$ in the two-dimensional one.  In addition, we denote $(r,\gamma)$ the polar coordinates of $x$ and $\rho=kR'$. Then it follows that for $d=2,3$, $G_{0}$ satisfies the usual Sommerfeld radiaton condition
\begin{equation}\label{usuradi}
\frac{\partial G_{0}}{\partial r} - ik G_{0} = o(r^{-\frac{(d-1)}{2}}), \quad \quad \text{as} \quad \quad r\to \infty.
\end{equation}

In the general case $\Im \theta > 0$, $G_{\theta}$ is expressed as the sum of $G_{0}$ and a correction term $P_{\theta}$, i.e.,
\begin{equation}\label{new}
G_{\theta}(x,y) = G_0(x,y) + P_{\theta}(x,y).
\end{equation}
The representation for $P_{\theta}$ and its first derivatives are derived in the form of Laplace-type integral. This allows to obtain asymptotic expansions of $P_{\theta}$ in the far fiel (as $\rho \to \infty$) by using the modified saddle point method of Ott. $P_{\theta}$ is approximated by $P_{\theta,N}$ for $N=0,1, \ldots$, and the following bound on the error of this approximation is obtained \cite{Th}, \cite{cw} \cite{cw1}, \cite{cw2} for $N=0,1,\ldots$ and for all $\rho_{0} >0$:
\begin{equation}\label{error}
|P_{\theta} - P_{\theta,N}| \leq \left\{ \begin{array}{ll}
\frac{C}{\rho^{N+1}}, & \text{when $d=3$,}\\
\\
\frac{C}{\rho^{N+\frac{3}{2}}} & \text{when $d=2$},
\end{array} \right.
\end{equation}
when $\rho \geq \rho_{0}$, where $C$ is a positive constant depending on $N$ and $\rho_0$. Hence,
\begin{equation}
P_{\theta} \to 0 \quad \quad \text{as} \quad \quad \rho \to \infty,
\end{equation}
uniformly on $\theta$. As a consequence,
\begin{equation}\label{grbcw1}
G_{\theta} \to G_0 \quad \quad \text{as} \quad \quad \rho \to \infty,
\end{equation}
and it may be concluded that $G_{\theta}$ also satisfies the radiation condition (\ref{usuradi}).

\end{document}